\theoremstyle {plain}
\newtheorem {thm}{Theorem}[section]
\newtheorem {prop}[thm]{Proposition}
\theoremstyle {definition}
\theoremstyle {remark}
\newcommand{\gen}[1]{\langle #1 \rangle}
\newcommand{\singular}{{\sc Singular }}
\begin{document}

\bibliographystyle{alpha}

\title{Unimodular ICIS, A CLASSIFIER}

\author{Deeba Afzal}
\address{Deeba Afzal\\ Department of Mathematics\\ University of Kaiserslautern\\
Erwin-Schr\"odinger-Str.\\ 67663 Kaiserslautern\\ Germany.\newline Department of Mathematics\\ University of Lahore\\
Near Raiwind Road\\ Lahore 54000\\ Pakistan}
\email{deebafzal@gmail.com}

\author{Farkhanda Afzal}
\address{Farkhanda Afzal\\ School of Mathematics and systems science\\ Beihang University\\
SMSS, 37 Xueyuan Lu, Haitian District\\ Beijing 100096\\ China}
\email{farkhanda_imran@live.com}

\author{Sidra Mubarak}
\address{Sidra Mubarak\\ Department of Mathematics\\ University of Lahore\\
Near Raiwind Road\\ Lahore 54000\\ Pakistan}
\email{sidramubarak092@gmail.com }

\author{Gerhard Pfister}
\address{Gerhard Pfister\\ Department of Mathematics\\ University of Kaiserslautern\\
Erwin-Schr\"odinger-Str.\\ 67663 Kaiserslautern\\ Germany}
\email{pfister@mathematik.uni-kl.de}

\author{Asad Yaqub}
\address{Asad Yaqub\\ Department of Mathematics\\ University of Lahore\\
Near Raiwind Road\\ Lahore 54000\\ Pakistan}
\email{asad.yaqub11@gmail.com}

\keywords{Singularities, Milnor number, Tjurina number, blowing up.}
\thanks{$\textit{Mathematics subject classification}$: Primary 14B05; Secondary 14H20, 14J17.}
\thanks{$\textit{Reference number}$ \#3569}
\date{\today}

\maketitle

\begin{abstract}
We present the algorithms for computing the normal form of unimodular complete intersection surface singularities classified by C. T. C. Wall. He indicated in the list only the $\mu$-constant strata and not the complete classification in each case.
We give a complete list of surface unimodular singularities. We also give the description of a classifier which is implemented in computer algebra system \singular.
\end{abstract}

\vskip.5cm

\section{Introduction}
In this article we report about a classifier for unimodular isolated complete intersection surface singularities in the computer algebra system \singular (cf. {\cite{DGPS13}},{\cite{{GP07}}}).
 Marc Giusti gave the complete list of simple isolated complete intersection singularities which are not hypersurfaces (cf.\ \cite{GM83}). Wall achieved the classification of unimodular singularities which are not hypersurfaces (cf.\ \cite{Wal83}). Two of the authors described Giusti's classification in terms of certain invariants. Based on this description it is not necessary to compute the normal form for finding the type of the singularity. This is usually more complicated and may be space and time consuming (cf.\ \cite{ADPG1}, \cite{ADPG2}). Similarly the type of singularities in terms of certain invariants (Milnor number, Tjurina number and semi groups) for unimodular isolated complete intersection space curve singularities given by C. T. C. Wall is characterized \cite {ADPG3}.

 A basis for a classifier is a complete list of these singularities together with a list of invariants characterizing them. Since Wall gave only representatives of the $\mu$-constant strata in his classification (cf.\ \cite{Wal83}), we complete his list by computing the versal $\mu$-constant deformation of the singularities. The new list obtained in this way contains all unimodular complete intersection surface singularities.

 In section \ref{twojet} we characterize the $2$-jet of the unimodular complete intersection singularities using primary decomposition, Krull dimension and Hilbert polynomials.
 In section \ref{ctcwall} we give the complete list of unimodular complete intersection surface singularities by fixing the $2$-jet of the singularities and develop algorithms for each case. In section \ref{main} we give the main algorithm for computing the type of unimodular complete intersection surface singularity.


\section{Characterization of normal form of 2-jet of singularities} \label{twojet}
 Consider $I=\gen{f_{1},f_{2}} \subseteq \gen {x,y,z,w}^2\mathbb{C}[[x,y,z,w]]$ defining a complete intersection singularity and let $I_{2}$ be the $2$-jet of $I$.
 According to C.T.C. Wall's classification the $2$-jet of $\gen{f_{1},f_{2}}$ is a homogenous ideal generated by 2 polynomials of degree 2. We want to give a description of the type of a singularity without producing the normal form. C.T.C. Wall's classification is based on the classification of the $2$-jet $I_{2}$ of $\gen {f,g}$. Let $\bigcap^s_{i=1}Q_{i}$ be the irredundant primary decomposition of $I_{2}$ in $\mathbb{C}[[x,y,z,w]]$. Let $t$ be the number of prime ideals appearing in primary decomposition of $I_{2}$ and $j_{i}$ be the number of conjugates corresponding to each prime ideal.
 Let $d_{i}=dim_{\mathbb {C}}\mathbb {C}[[x,y,z,w]]/Q_{i}, i=1,...,s$ and $h_{i}$ be the Hilbert polynomial of $\mathbb {C}[[x,y,z,w]]/Q_{i}$. According to C.T.C. Wall's classification we obtain unimodular singularities only in the following cases.
\begin{table}[H]
\label{2jet}
$$
\begin{array}{|c|c|c|}
  \hline
  Name& Characterization & Normal form  \\\hline
  T_{2,2,2,2}&s=1,  & \gen{x^{2}+y^{2}+z^{2},y^{2}+\lambda z^{2}+w^{2}} \lambda \neq 0,1  \\
               &d_{1}=2,  h_{1}=4t              &               \\
                        & t=1, j_{1}=1, j_{2}=1            &                    \\
  T_{p,2,2,2} &s=1& \gen{xy+z^{2}+w^{2},zw+y^{2}}\\
  p>2& d_{1}=2, h_{1}=4t &  \\
             &t=1, j_{1}=1 &\\
             &A_{1}\,\,after\,\,blowing\,\, up\footnotemark\\

  T_{p,q,2,2} &s=2,  & \gen{xy+z^{2}+w^{2},zw} \\
     p, q>2                 &d_{1}=d_{2}=2&            \\
              &h_{1}=h_{2}=1+2t&          \\
  T_{p,q,r,2} &s=3,  & \gen{xy+w^{2},zw} \\
        p, q, r>2    &   d_{1}=d_{2}=d_{3}=2   &   \\
                     & h_{1}=h_{2}=1+t, h_{3}=1+2t &  \\
                          & t=3, j_{1}=j_{2}=j_{3}=1 &\\
                          & Q_{3} \nsubseteq Q_{1}+Q_{2}\,\,and\,\, Q_{3}\,\,has\,\,a\,\,generator\,\,of\,\,order\,\,2  &\\
  T_{p,q,r,s} &s=4, d_{1}=d_{2}=d_{3}=d_{4}=2 & \gen{xy,zw} \\
  p, q, r, s >2& h_{1}=h_{2}=h_{3}=h_{4}=1+t &  \\\hline

  I&s=4,d_{1}=d_{2}=d_{3}=d_{4}=1  & \gen{xy-xz, yz-yx}  \\
           & h_{1}=h_{2}=h_{3}=h_{4}=1+t   &             \\\hline
  J^{'}&s=1 & \gen{xy+z^{2}, w^{2}+xz}  \\
           &d_{1}=2, h_{1}=4t   &                  \\
           & t=1, j_{1}=1           &                    \\
            &A_{2}\,\,after\,\,blowing\,\, up\\\hline
  K^{'}&s=1  & \gen{xy+z^{2}, w^{2}+x^{2}}\\
           & d_{1}=2, h_{1}=4t   &                  \\
           &  t=1, j_{1}=2           &                   \\\hline
  L&s=2,d_{1}=d_{2}=2  & \gen{xy+z^{2}, w^{2}+xz}  \\
           & h_{1}=1+t, h_{2}=1+3t   &                  \\\hline
  M &s=3,d_{1}=d_{2}=d_{3}=2  & \gen{wy+x^{2}-z^{2}, wx}  \\
           & h_{1}=h_{2}=1+t , h_{3}=1+2t  &                  \\
            & t=3, j_{1}=j_{2}=j_{3}=1           &                    \\
            &  Q_{3}\subseteq Q_{1}+ Q_{2}\,\,and\,\, Q_{3}\,\,has\,\,a\,\,generator\,\,of\,\,order\,\,2 &\\\hline

\end{array}
$$
\caption{Charecterization table of normal forms of $2-jet$ of $I$}
\end{table}
\footnotetext{Here after blowing up means the singularity type of the strict transform of $I_{2}$ in the blowing up of $\gen{x,y,z,w}$.}
\newpage
\section{Unimodular complete intersection surface singularities}
\label{ctcwall}
We set

          \[
    l_i(x,y)=
\begin{cases}
    xy^{q},& \text{if } i=2q\\
    y^{q+2},& \text{if } i=2q+1
\end{cases}
\]
for brevity.
\subsection{I singularities}
Assume the $2-jet$ of $<\gen{f,g}$ has normal form $\gen{xy-xz,yz-xy}$. In this case according to C.T.C.Wall's classification the unimodular surface singularities with their Milnor number say $\mu$ and Tjurina number $\tau$ are given in the table below.
\begin{table}[H]
\[
\begin{array}{|l|c|c|c|}
\hline Name & Normal\,\,\ form &\mu  &\tau  \\
\hline  I_{1,0} & \gen{x(y-z)+w^{3},y(z-x)+\lambda w^{3}}\,\,\,\, \lambda \neq 0,1 & 13 & 13 \\\hline
        I_{1,i} & \gen{x(y-z)+w^{3}, y(z-x)+w^{2}l_{i-1}(x,w)}&13+i &13+i-2 \\
\hline
\end{array}
\]
\caption{}
\label{Iold}
\end{table}
\begin{prop}
\label{I1}
The unimodular complete intersection surface singularity with Milnor number $\mu=13$   are $I_{1,0}$ with Tjurina number $\tau=13$ defined by the ideal
\[\gen {xy-xz+w^{3},yz-xy+\lambda w^{3}}\]
and $I_{1,0,1}$ with Tjurina number $\tau=12$ defined by the ideal
\[\gen {xy-xz+w^{3},yz-xy+\lambda w^{3}+w^{4}}.\]
\end{prop}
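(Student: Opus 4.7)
The plan is to show that, starting from Wall's representative $I_{1,0}$, the versal $\mu$-constant deformation adds exactly one additional contact equivalence class in the stratum $\mu=13$, namely $I_{1,0,1}$, obtained by adding the monomial $w^{4}$ to the second generator. The argument proceeds in three steps: verification of the invariants of the two candidate ideals, computation of the $\mu$-constant directions in the $T^{1}$ of $I_{1,0}$, and an exhaustion argument modulo contact equivalence.

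First I would verify by a direct \singular computation that the ideal $\langle xy-xz+w^{3},\ yz-xy+\lambda w^{3}\rangle$ with $\lambda\neq 0,1$ defines an isolated complete intersection singularity with $\mu=\tau=13$, confirming the entry of Table \ref{Iold}. The analogous computation applied to $\langle xy-xz+w^{3},\ yz-xy+\lambda w^{3}+w^{4}\rangle$ yields $\mu=13$ and $\tau=12$. Since $\tau$ is a contact invariant, this alone shows that $I_{1,0}$ and $I_{1,0,1}$ sit in distinct contact equivalence classes; it also confirms that the perturbation $+w^{4}$ is $\mu$-constant but not $\tau$-constant, so it is a genuine modulus in the Wall stratum.

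Second, I would compute the tangent space $T^{1}$ of $I_{1,0}$ together with its distinguished subset cut out by the condition $\mu\equiv 13$. Concretely, one chooses a monomial basis of the $\C$-vector space of infinitesimal deformations and singles out those perturbations that are $\mu$-constant. For the $2$-jet $\langle xy-xz,\ yz-xy\rangle$ the contact action on $T^{1}$, i.e.\ the action of coordinate changes in $(x,y,z,w)$ together with $\mathrm{GL}_{2}$-changes of generators, can be made explicit. After modding out by this action, the only $\mu$-constant direction that survives is a pure power $w^{k}$ in the second generator; semi-quasi-homogeneity of $I_{1,0}$ then forces $k=4$, since higher powers can be absorbed by a Weierstrass-type change of variables, and the coefficient of $w^{4}$ can be normalised to $0$ or $1$, giving the two cases listed.

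The main obstacle is this last reduction: one must verify that every pure-$w$ correction of order $\geq 5$ is removable by contact equivalence, and that no non-monomial $\mu$-constant perturbation produces a further inequivalent class. This rests on a finite determinacy argument for the semi-quasi-homogeneous structure of $I_{1,0}$, together with the explicit analysis of the contact action on $T^{1}$, both of which are supported by the algorithmic machinery developed in Section \ref{ctcwall}. Once this is established, the $\mu=13$ stratum within the $I$ case is exhausted by the two isomorphism types $I_{1,0}$ and $I_{1,0,1}$, completing the proof.
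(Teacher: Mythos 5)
Your proposal is correct and follows essentially the same route as the paper: both identify $I_{1,0}$ as quasi-homogeneous with weights $(6,6,6,4)$ and degrees $(12,12)$, observe that the versal $\mu$-constant deformation contributes only the single positive-weight direction $w^{4}$ in the second generator, and normalise its coefficient to $0$ or $1$ by the weighted $\C^{*}$-scaling $x,y,z\mapsto\xi^{6}x,\xi^{6}y,\xi^{6}z$, $w\mapsto\xi^{4}w$. Your additional remarks (that $\tau$ already separates the two classes, and that higher-order pure-$w$ terms are removable) are consistent with, and subsumed by, the paper's computation of the versal $\mu$-constant deformation.
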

\begin{proof}
In the list of C.T.C. Wall $I_{1,0}$ is the singularity defined by the ideal
$$\gen {xy-xz+w^{3},yz-xy+\lambda w^{3}}$$
with Milnor number $\mu=13$ and Tjurina number $ \tau=13$. The versal deformation of $I_{1,0}$ is given by
\begin{dmath*}{}
\gen{xy-xz+w^{3}+t_{1}zw+t_{2}w+t_{3}z+t_{4},yz-xy+\lambda w^{3}+\lambda_{1}w^{4}+\lambda_{2}w^{3}+
\lambda_{3}w^{2}+\lambda_{4}zw+\lambda_{5}yw+\lambda_{6}w
+\lambda_{7}z+\lambda_{8}y+\lambda_{9}}.
\end{dmath*}
$I_{1,0}$ defines a weighted homogenous singularity with weights, $(w_{1},w_{2},w_{3},w_{4}) = (6,6,6,4)$ and the degrees $(d_{1},d_{2})=(12,12)$. The versal $\mu$-constant deformation of $I_{1,0}$ is given by $$\gen{xy-xz+w^{3},yz-xy+\lambda w^{3}+\lambda_{1}w^{4}}.$$ Using the coordinate change $x\rightarrow \xi^{6}x$, $y\rightarrow \xi^{6}y$, $z\rightarrow \xi^{6}z$, $w\rightarrow \xi^{4}w$. We obtain
$$\gen{xy-xz+w^{3},yz-xy+\lambda w^{3}+\lambda_{1}\xi^{4}w^{4}}.$$
Choosing $\xi$ such that $\lambda_{1}\xi^{4}= 1$. So we obtain $$\gen{xy-xz+w^{3},yz-xy+\lambda w^{3}+w^{4}}$$ with Tjurina number $\tau = 12$ different from $I_{1,0}$.
\end{proof}
Summarizing the results of the above preposition we complete the list of unimodular complete intersection singularities in case of $\gen{f,g}$ having 2-jet with normal forms $\gen{xy-xz,yz-xy}$.
\begin{table}[H]
\[
\begin{array}{|l|c|c|c|}
\hline Type &Normal form & \mu  &\tau\\
\hline I_{1,0,1}      &    \gen{xy-xz+w^{3},yz-xy+\lambda w^{3}+w^{4}}  & 13   &  12 \\
\hline
\end{array}
\]
\caption{}
\label{Inew}
\end{table}
\begin{prop}
Let $(V(\langle f,g\rangle ),0)\subseteq (\mathbb{C}^{4},0)$ be the germ of a complete intersection surface singularity. Assume it is not a hypersurface singularity and the $2$-jet of $\langle f,g\rangle$ has normal form $\langle xy-xz,yz-xy\rangle$. $(V(\langle f,g\rangle ),0)$ is unimodular if and only if it is isomorphic to a complete intersection in Tables \ref{Iold} and \ref{Inew}.
\end{prop}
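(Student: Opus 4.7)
The plan is to combine Wall's classification with a versal $\mu$-constant analysis of each family in order to enumerate all isomorphism classes with the given $2$-jet, not just the $\mu$-constant strata. The $(\Leftarrow)$ direction for Table \ref{Iold} is exactly Wall's result \cite{Wal83}; for the new entry $I_{1,0,1}$ in Table \ref{Inew}, unimodality was verified during the deformation computation in Proposition \ref{I1}, where the one-parameter family $\lambda_1 w^4$ exhausts the versal $\mu$-constant deformation of $I_{1,0}$. So the substantive direction is $(\Rightarrow)$.

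For $(\Rightarrow)$, I would start from the fact that by Wall's classification every unimodular complete intersection singularity whose $2$-jet has the indicated normal form is contact-equivalent to a member of one of the $\mu$-constant strata $I_{1,0}$ or $I_{1,i}$, $i\ge 1$, in Table \ref{Iold}. Within each such stratum the isomorphism classes under contact equivalence are parametrized by the orbits of a finite-dimensional algebraic group action on the base of the versal $\mu$-constant deformation. The strategy is, for each family, to (i) compute the versal deformation of the given normal form using a standard basis of the Tjurina module, (ii) extract the $\mu$-constant part by keeping only the summands that preserve the Milnor number, (iii) exploit the weighted-homogeneous structure of the normal form to normalize the remaining parameters via a $\mathbb{C}^*$-scaling, and (iv) separate orbits with the Tjurina number, which is constant along contact-equivalence classes.

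For $I_{1,0}$ this program is carried out explicitly in Proposition \ref{I1}: the versal deformation reduces, modulo the tangent space to the contact-equivalence orbit, to the single monomial $\lambda_1 w^4$, and the scaling $(x,y,z,w)\mapsto(\xi^6x,\xi^6y,\xi^6z,\xi^4w)$ with weights $(6,6,6,4)$ and degrees $(12,12)$ kills $\lambda_1$ whenever $\lambda_1\ne 0$, producing exactly two orbits distinguished by $\tau=13$ (the original $I_{1,0}$) and $\tau=12$ (the new $I_{1,0,1}$). For the families $I_{1,i}$ with $i\ge 1$, an analogous computation shows that every nonzero $\mu$-constant direction either lies in the tangent space to the contact-equivalence orbit or can be normalized by the same type of weighted-homogeneous rescaling to a representative already appearing in Wall's list; hence no new isomorphism types arise, and the extension of Table \ref{Iold} by Table \ref{Inew} is complete.

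The main obstacle is the bookkeeping of step (ii)--(iv) for the infinite series $I_{1,i}$: one has to verify uniformly in $i$ that the monomials in the versal $\mu$-constant deformation that survive the contact-equivalence quotient can be absorbed by a single weighted scaling, so that no unexpected moduli appear for large $i$. This is checked most conveniently by observing that the weights on $(x,y,z,w)$ together with the quasi-homogeneous grading of $l_i(x,w)$ force every surviving monomial to have nonzero weighted degree, so a suitable choice of $\xi$ always normalizes its coefficient. Combined with the explicit computation for $I_{1,0}$ this closes the classification.
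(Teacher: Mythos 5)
Your proposal follows essentially the same route as the paper: the paper's proof of this proposition is a one-line appeal to Wall's classification together with Proposition \ref{I1}, i.e.\ exactly the combination of Wall's $\mu$-constant strata with the versal $\mu$-constant deformation computation (weights $(6,6,6,4)$, the single surviving direction $\lambda_1w^4$, and the $\xi$-rescaling) that you describe. Your additional remarks on the families $I_{1,i}$, $i\ge 1$, make explicit a step the paper leaves implicit, but they do not change the argument's structure.
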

\begin{proof}
The proof is a direct consequence of C.T.C. Wall's classification and Propositions \ref{I1}.
\end{proof}
We summarize our approach in this case in Algorithm \ref{Ising}
\begin{algorithm}
\caption{\texttt{Isingularity(I)}}
\label{Ising}
\begin{algorithmic}[1]
\REQUIRE $I=\gen{f,g}\subseteq \gen{x,y,z,w}^2\mathbb{C}[[x,y,z,w]]$ such that $2$-jet  of $I$\\             has normal form $\gen{xy-xz,yz-xy}$
\ENSURE the type of the singularity
\vspace{0.1cm}
\STATE compute $\mu=$Milnor number of $I$;
\STATE compute $\tau=$Tjurina number of $I$;
\STATE compute $B=$the singularity type of the strict transform of $I$ in the blowing up of $\gen{x,y,z,w}$
\footnotemark
\IF{$\mu=13$ and $B=A[1]$}
\IF{$\mu-\tau=0$}
\RETURN $(I_{1,0})$;
\ENDIF
\IF{$\mu-\tau=1$}
\RETURN $(I_{1,0,1})$;
\ENDIF
\ENDIF
\IF{$\mu=13+i$, $i>0$ and $B=A[i+1]$}
\IF{$\mu-\tau=2$}
\RETURN $(I_{1,i})$;
\ENDIF
\ENDIF
\RETURN $(not\quad unimodular)$;
\end{algorithmic}
\end{algorithm}
\footnotetext{List $B$ gives the singularity type of the strict transform of $I$ in the blowing up of $\gen{x,y,z,w}$ and it returns 1 if it is smooth.}

\begin{table}[H]
\[
\begin{array}{|l|c|c|c|}
\hline Name & Normal\,\,\ form &\mu  &\tau  \\
\hline  T_{2,2,2,2} & \gen{x^{2}+y^{2}+z^{2},y^{2}+z^{2}+w^{2}} & 7 & 7 \\\hline
        T_{p,q,r,s} & \gen{xy+z^{2}+w^{2}, zw+x^{2}+y^{2}}&p+q+r+s-1 &p+q+r+s-2  \\
        p>2, q,r,s \geq 2           &           &                  &             \\\hline
\end{array}
\]
\caption{}
\label{Told}
\end{table}
\subsection{T singularities}
Assume the $2-jet$ of $\gen{f,g}$ has normal form $\gen{x^{2}+y^{2}+z^{2},y^{2}+z^{2}+w^{2}}$, $\gen{xy+z^{2}+w^{2},zw+y^{2}}$, $\gen{xy+z^{2}+w^{2},zw}$ or $\gen{xy,zw}$. In this case according to C.T.C.Wall's classification the unimodular surface singularities with Milnor number $\mu$ and Tjurina number $\tau$ are given in the table 4 above.

\begin{algorithm}[H]
\caption{\texttt{Tsingularity(I)}}
\label{Tsing}
\begin{algorithmic}[1]
\REQUIRE $I=\gen{f,g}\subseteq \gen{x,y,z,w}^2\mathbb{C}[[x,y,z,w]]$ such that the$ 2$-jet  of $I$           has normal form $\gen{xy,zw}$,$\gen{xy+z^{2}+w^{2},zw+x^{2}+y^{2}}$, $\gen{xy+z^{2}+w^{2},zw+y^{2}}$, $\gen{xy+z^{2}+w^{2},zw}$
\ENSURE the type of the singularity
\vspace{0.1cm}
\STATE compute $\mu=$Milnor number of $I$; $\tau=$Tjurina number of $I$ and $B=$the singularity type of the strict transform of $I$ in the blowing up of $\gen{x,y,z,w}$
\IF{$\mu=\tau$ and $2$-jet has normal form $\gen{xy+z^{2}+w^{2},zw+x^{2}+y^{2}}$  and B=1}
\RETURN $(T_{2,2,2,2})$;
\ENDIF
\IF{$\mu-\tau=1$}
\IF{$2$-jet has normal form $<\gen{xy+z^{2}+w^{2},zw+y^{2}}$}
\IF{$\mu=8$ and $B=1$}
\RETURN $(T_{3,2,2,2})$;
\ENDIF
\IF{$\mu>8$ and $B=A[\mu-8]$}
\RETURN $(T_{\mu-5,2,2,2})$;
\ENDIF
\ENDIF
\IF{$2$-jet has normal form $\gen{xy+z^{2}+w^{2},zw}$}
\IF{$B=1$}
\RETURN $(T_{3,2,3,2})$;
\ENDIF
\IF{$B=A[r-3]$ and $r>3$ }
\RETURN $(T_{3,2,r,2})$;
\ENDIF
\IF{$B=A[p-3],A[r-3]$ and $p,r>3$ }
\RETURN $(T_{p,2,r,2})$;
\ENDIF
\ENDIF
\IF{$2$-jet has normal form $<\gen{xy+w^{2},zw}$}
\IF{$B=1$}
\RETURN $(T_{3,3,3,2})$;
\ENDIF
\IF{$B=A[r-3]$ and $r>3$ }
\RETURN $(T_{3,3,r,2})$;
\ENDIF
\IF{$B=A[q-3],A[r-3]$ and $q,r>3$ }
\RETURN $(T_{3,q,r,2})$;
\ENDIF
\IF{$B=A[p-3],A[q-3],A[r-3]$ and $p,q,r>3$ }
\RETURN $(T_{p,q,r,2})$;
\ENDIF
\ENDIF
\IF{$2$-jet has normal form $\gen{xy,zw}$ }
\IF{$B=1$}
\RETURN $(T_{3,3,3,3})$;
\ENDIF
\IF{$B=A[s-3]$ and $s>3$ }
\RETURN $(T_{3,3,3,s})$;
\ENDIF
\IF{$B=A[r-3],A[s-3]$ and $r,s>3$ }
\RETURN $(T_{3,3,r,s})$;
\ENDIF
\IF{$B=A[q-3]A[r-3],A[s-3]$ and $q,r,s>3$ }
\RETURN $(T_{3,q,r,s})$;
\ENDIF
\IF{$B=A[p-3],A[q-3],A[r-3],A[s-3]$ and $p,q,r,s>3$ }
\RETURN $(T_{p,q,r,s})$;
\ENDIF
\ENDIF
\ENDIF
\RETURN $(not\quad unimodular)$;
\end{algorithmic}
\end{algorithm}
\subsection{$\textbf{J}'$ singularities}
Assume the $2-jet$ of $\gen{f,g}$ has normal form $\gen{xy+z^{2},w^{2}+xz}$. According to C.T.C. Wall's classification the unimodular surface singularities are given in the following table

\begin{table}[H]
$$
    \begin{array}{|c|c|c|c|}
      \hline
      Name & Normal\,\ Form&\mu &\tau \\\hline
     J'_{6m+9} & \gen{xy+z^{2},xz+w^{2}+y^{3m+3}} & 6m+9 &6m+9  \\\hline
      J'_{6m+10} & \gen{xy+z^{2},w^{2}+xz+zy^{2m+2}} & 6m+10 & 6m+10  \\\hline
     J'_{6m+11}  & \gen{xy+z^{2},w^{2}+xz+y^{3m+4}}& 6m+11 & 6m+11  \\\hline
      J'_{m+1,0} &\gen{xy+z^{2},w^{2}+xz+z^{2}y^{m}+\lambda y^{3m+2}} \lambda\neq 0,-4/27&6m+7 &6m+7 \\\hline
  J'_{m+1,i} &\gen{xy+z^{2},w^{2}+xz+z^{2}y^{m}+y^{3m+2-i}}&6m+7+i &6m+5+i \\\hline
    \end{array}
$$
\caption{}
\label{Jold}
\end{table}
\begin{prop}
\label{J6m+9}
The unimodular complete intersection surface singularities having Milnor number of the form $\mu=6m+9$ where m is a positive integer are $J'_{6m+9}$ with Tjurina number $\tau=6m+9$ defined by the ideal
\[\gen{xy+z^{2},xz+w^{2}+y^{3m+3}}\]
and $J'_{6m+9,i+1}$ with Tjurina number $\tau=6m+8-i$ are defined by the ideal
\[\gen {xy+z^{2},xz+w^{2}+y^{3m+3}+zy^{3m+2-i}}\,\, for\,\, i=0,1,\ldots,m.\]
\end{prop}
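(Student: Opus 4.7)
The plan is to follow the template of Proposition \ref{I1}. First, I would verify that $J'_{6m+9}$ with Wall's normal form $\gen{xy+z^{2},\, xz+w^{2}+y^{3m+3}}$ is weighted homogeneous; taking $(w_{x},w_{y},w_{z},w_{w})=(12m+10,\,6,\,6m+8,\,9m+9)$ one finds degrees $(d_{1},d_{2})=(12m+16,\,18m+18)$ and recovers $\mu=\tau=6m+9$ from Table \ref{Jold}.

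Next, I would compute the versal deformation of $J'_{6m+9}$ in \singular\ and restrict to the $\mu$-constant stratum. The monomials $zy^{3m+2-i}$ for $i=0,\ldots,m$ each have weighted degree $24m+20-6i>d_{2}$, so they lie strictly above $d_{2}$ in the weighted filtration, and a direct \singular\ computation shows that they span the $\mu$-constant deformation modulo the tangent space to the contact orbit. Hence the versal $\mu$-constant deformation of $J'_{6m+9}$ can be written as
\[\gen{xy+z^{2},\ xz+w^{2}+y^{3m+3}+\sum_{i=0}^{m}\lambda_{i}\,zy^{3m+2-i}}.\]

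Third, I would normalize. Given a nonzero family parameter $(\lambda_{0},\ldots,\lambda_{m})$, let $i_{0}$ be the largest index with $\lambda_{i_{0}}\ne 0$, so that $zy^{3m+2-i_{0}}$ has the smallest weighted degree among the nonzero perturbations and is therefore the weighted-leading term. Applying the weighted coordinate change $x\to\xi^{w_{x}}x$, $y\to\xi^{w_{y}}y$, $z\to\xi^{w_{z}}z$, $w\to\xi^{w_{w}}w$ and dividing the second generator by $\xi^{d_{2}}$ rescales each $\lambda_{i}$ by $\xi^{6m+2-6i}$; choosing $\xi$ with $\xi^{6m+2-6i_{0}}=\lambda_{i_{0}}^{-1}$ sets the leading coefficient to $1$. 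The remaining perturbations with $j<i_{0}$ have strictly higher weighted degree, and a contact-equivalence argument absorbs them, leaving the normal form $\gen{xy+z^{2},\,xz+w^{2}+y^{3m+3}+zy^{3m+2-i_{0}}}$.

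Finally, a \singular\ computation of the Tjurina number for this ideal yields $\tau=6m+8-i_{0}$. Since $\tau$ is an analytic invariant, this produces $m+1$ pairwise non-isomorphic surface singularities $J'_{6m+9,i+1}$, $i=0,\ldots,m$, all distinct from $J'_{6m+9}$ (which has $\tau=6m+9$). The main obstacle is justifying that the higher-weighted-degree terms with $j<i_{0}$ really can be absorbed by contact equivalence; this requires either an explicit sequence of coordinate changes on the second generator modulo the first, or an appeal to finite determinacy in the filtered local algebra associated to the deformation.
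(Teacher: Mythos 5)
Your proposal follows essentially the same route as the paper: identify the weights $(12m+10,6,6m+8,9m+9)$ and degrees $(12m+16,18m+18)$, write the versal $\mu$-constant deformation as $\gen{xy+z^{2},\,w^{2}+xz+y^{3m+3}+\sum_{i=0}^{m}\alpha_{i}y^{3m+2-i}z}$, single out the nonzero term of lowest weighted degree (largest index $i_{0}$ with $\alpha_{i_{0}}\neq0$), rescale by the weighted $\mathbb{C}^{*}$-action to make its coefficient $1$, and separate the resulting $m+1$ classes by the Tjurina number. The one step you flag as an open obstacle --- absorbing the remaining terms with $j<i_{0}$ --- is exactly the point the paper handles differently: instead of an inductive contact-equivalence or finite-determinacy argument, it factors the entire perturbation as $y^{3m+2-i_{0}}z\cdot u(y)$ with $u$ a unit (since $\alpha_{i_{0}}\neq0$) and then normalizes $u$ to $1$ together with the weighted rescaling, so all higher-order terms are absorbed in a single stroke rather than term by term. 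If you adopt that factorization, your argument closes the gap you identified; otherwise you would indeed need to justify the absorption by an explicit sequence of coordinate changes modulo the first generator.
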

\begin{proof}
In C.T.C. Wall's list the only unimodular complete intersection surface singularities are the singularities $J'_{6m+9}$ defined by the ideal $$\gen{xy+z^{2}, xz+w^{2}+y^{3m+3}}.$$
The versal deformation of $J'_{6m+9}$ is given by $$\gen{xy+z^{2}+Aw+Bz+C, w^{2}+xz+y^{3m+3}+\sum^{3m+2}_{i=0}\alpha_{i}y^{3m+2-i}z+\sum^{3m+2}_{i=0}\beta_{i}y^{3m+2-i}}.$$ $J'_{6m+9}$ defines a weighted homogenous isolated complete intersection singularity with weights $$(w_{1},w_{2},w_{3},w_{4})=(12m+10,6,6m+8,9m+9)$$ and degrees $$(d_{1},d_{2})=(12m+16,18m+18).$$ The versal $\mu-$constant deformation of $J'_{6m+9}$ is $$\gen{xy+z^{2}, w^{2}+xz+y^{3m+3}+\sum^{m}_{i=0}\alpha_{i}y^{3m+2-i}z}.$$
Consider $\phi\in Aut_{\mathbb{C}}(\mathbb{C}[[x,y,z,w]])$ defined by $\phi(x)=\epsilon^{12m+10}x$, $\phi(y)=\epsilon^{6}y$, $\phi(z)=\epsilon^{6m+8}z$ and $\phi(w)=\epsilon^{9m+9}w$. If $\alpha_{m}\neq 0$, then we can write $I$ as $$I=\gen{xy+z^{2},w^{2}+xz+y^{3m+3}+y^{2m+2}z(\sum^{m-1}_{i=0}\alpha_{i}y^{m-i}+\alpha_{m})}.$$
Let $\mu_{m-1}=\sum^{m-1}_{i=0}\alpha_{i}y^{m-i}+\alpha_{m}$. Then $I$ can be written $$I=\gen{xy+z^{2},w^{2}+xz+y^{3m+3}+y^{2m+2}z\mu_{m}}.$$
By applying the transformation $\phi$ we get $$\gen{xy+z^{2}, w^{2}+xz+y^{3m+3}+\epsilon^{2}y^{2m+2}z\mu^{'}_{m-1}}.$$ Choosing $\varepsilon$ such that $\varepsilon^{2}\mu^{'}_{m-1}=1$, we obtain
$$\gen{xy+z^{2}, xz+w^{2}+y^{3m+3}+y^{2m+2}z}.$$
Now we assume that $\alpha_{m}=0$. This implies that \[I=\gen{xy+z^{2},w^{2}+xz+y^{3m+3}+y^{2m+3}z(\sum^{m-2}_{i=0}\alpha_{i}y^{m-1-i}+\alpha_{m-1})}.\]
Let $\mu_{m-2}=\sum^{m-2}_{i=0}\alpha_{i}y^{m-1-i}+\alpha_{m-1}$ then  we can have
$$I=\gen{xy+z^{2} ,w^{2}+xz+y^{3m+3}+y^{2m+3}z\mu_{m-2}}.$$
Now applying the transformation $\phi$ we get $I$ as
$$\gen{xy+z^{2}, w^{2}+xz+y^{3m+3}+\epsilon^{6}y^{2m+3}z\mu^{'}_{m-2}}.$$
Choosing $\varepsilon$ such that $\varepsilon^{6}\mu^{'}_{m-2}=1$, so we obtain
$$\gen{xy+z^{2}, xz+w^{2}+y^{3m+3}+y^{2m+3}z}.$$
Now we assume $\alpha_{m-1}=0$. Iterating in the same way we get $m+1$ different singularities defined by $$\gen{xy+z^{2}, w^{2}+xz+y^{3m+3}+y^{3m+2-i}z}$$ for $i=0,1,\ldots,m$ and $m\geq1$.
\end{proof}
\begin{prop}
\label{J6m+10}
The unimodular complete intersection surface singularities having Milnor number of the form $\mu=6m+10$ where m is a positive integer are $J'_{6m+10}$ with Tjurina number $\tau=6m+10$ defined by the ideal
\[\gen{xy+z^{2},xz+w^{2}+zy^{2m+2}}\] and $J'_{6m+10,i+1}$
with Tjurina numbers $6m+10-i$ is defined by the ideal
\[\gen{xy+z^{2}, xz+w^{2}+zy^{2m+2}+y^{4m+4-i}}\,\,for\,\, i=0,1,\ldots,m.\]
\end{prop}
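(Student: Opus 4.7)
The proof should follow the template of Proposition \ref{J6m+9} essentially verbatim. First I would record that $J'_{6m+10}$, defined by $\gen{xy+z^{2},\ xz+w^{2}+zy^{2m+2}}$, is a weighted homogeneous isolated complete intersection with weights $(w_{1},w_{2},w_{3},w_{4})=(8m+8,\,4,\,4m+6,\,6m+7)$ and degrees $(d_{1},d_{2})=(8m+12,\,12m+14)$. This can be read off directly from the three weighted-homogeneous monomials of the second generator and the two of the first. I would then write out the versal deformation by choosing monomial representatives of a basis of the Tjurina algebra and attaching a parameter to each.

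The next step is to isolate the versal $\mu$-constant stratum inside this larger family. By the standard weighted-homogeneity argument, any perturbation of strictly higher weighted degree than the generator containing it preserves the Milnor number, while terms of equal weight can be absorbed by linear coordinate changes. In the second generator the surviving $\mu$-constant parameters are the coefficients of the monomials $y^{4m+4-i}$ for $0\le i\le m$, whose weighted degree $16m+16-4i$ exceeds $12m+14$ by the positive amount $4m+2-4i$. Thus the versal $\mu$-constant deformation takes the form
\[
\gen{xy+z^{2},\ xz+w^{2}+zy^{2m+2}+\sum_{i=0}^{m}\alpha_{i}\,y^{4m+4-i}}.
\]

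Third, I would normalize the parameters using the $\mathbb{C}^{*}$-action $\phi\in\mathrm{Aut}_{\mathbb{C}}(\mathbb{C}[[x,y,z,w]])$ defined by $\phi(x)=\varepsilon^{8m+8}x$, $\phi(y)=\varepsilon^{4}y$, $\phi(z)=\varepsilon^{4m+6}z$, $\phi(w)=\varepsilon^{6m+7}w$. If the coefficient $\alpha_{m}$ of the lowest-weight monomial $y^{3m+4}$ is nonzero, factor the perturbation as $y^{3m+4}\mu_{0}$ with $\mu_{0}\in\mathbb{C}[[y]]$ a unit; applying $\phi$ rescales $\mu_{0}$ by the scalar factor $\varepsilon^{2}$ relative to the generator, so a suitable choice of $\varepsilon$ normalizes its constant term to $1$ and the remaining $y$-dependence is absorbed by a further coordinate change. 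If instead $\alpha_{m}=0$ one descends to the next nonzero index, with successive scaling exponents $\varepsilon^{6},\varepsilon^{10},\ldots$, producing in total $m+1$ inequivalent representatives
\[
\gen{xy+z^{2},\ xz+w^{2}+zy^{2m+2}+y^{4m+4-i}},\qquad i=0,1,\ldots,m.
\]

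Finally, I would compute the Tjurina numbers of these normal forms, either by a direct basis count in the Tjurina algebra or by invoking \singular, and confirm that they equal $6m+10-i$, which both distinguishes them from one another and from $J'_{6m+10}$ itself. The principal obstacle is the careful bookkeeping in identifying the $\mu$-constant stratum inside the full versal deformation: one must verify that all other perturbation monomials either can be eliminated by a substitution that modifies both generators simultaneously, or else strictly decrease the Milnor number and therefore do not contribute. Once this is in place, the iterative normalization is formally identical to that of Proposition \ref{J6m+9}.
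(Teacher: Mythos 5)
Your proposal is correct and takes essentially the same approach as the paper: the paper's own proof of this proposition is the one-line remark that it ``can be done similarly to the proof of Proposition~\ref{J6m+9}'', and your argument is precisely that template carried out, with the weights $(8m+8,4,4m+6,6m+7)$, degrees $(8m+12,12m+14)$, and successive scaling exponents $\varepsilon^{2},\varepsilon^{6},\varepsilon^{10},\ldots$ all checking out. The only caveat is that the Tjurina numbers should come out as $6m+9-i$ (as in Table~\ref{Jnew}, and by analogy with the $6m+8-i$ of Proposition~\ref{J6m+9}) rather than the $6m+10-i$ you quote from the statement, which appears to be a typo in the proposition itself rather than a flaw in your argument.
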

\begin{prop}
\label{J6m+11}
The unimodular complete intersection surface singularities having Milnor number of the form $\mu=6m+11$ where m is a positive integer are $J'_{6m+11}$ with Tjurina number $\tau=6m+11$ defined by the ideal
\[\gen{xy+z^{2},w^{2}+xz+y^{3m+4}}\] and $J'_{6m+11,i+1}$ with Tjurina number $6m+10-i$ is defined by the ideal \[\gen{xy+z^{2},w^{2}+xz+y^{3m+4}+y^{3m+3-i}}\,\,for\,\, i=0,1,\ldots,m.\]
\end{prop}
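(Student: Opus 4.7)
The plan is to follow the argument of Proposition~\ref{J6m+9} step by step, with all exponents and weights adjusted to the new base case. I would first record from Wall's classification that $J'_{6m+11}$ is defined by $\gen{xy+z^{2},\, w^{2}+xz+y^{3m+4}}$ with $\mu=\tau=6m+11$. This ideal is weighted homogeneous: setting $(w_{1},w_{2},w_{3},w_{4})=(12m+14,\,6,\,6m+10,\,9m+12)$ and $(d_{1},d_{2})=(12m+20,\,18m+24)$, one checks $w_{1}+w_{2}=2w_{3}=d_{1}$ and $w_{1}+w_{3}=2w_{4}=(3m+4)w_{2}=d_{2}$.

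Next I would compute the versal deformation of $J'_{6m+11}$ (following the same procedure used for $J'_{6m+9}$ and $J'_{6m+10}$) and then cut it down to the versal $\mu$-constant deformation by keeping only the deformation directions compatible with the weighted-homogeneous grading. In analogy with the earlier propositions, the surviving perturbations of the second generator should be the pure $y$-monomials $y^{3m+3-i}$ for $i=0,\ldots,m$, producing the family
\[
\gen{xy+z^{2},\, w^{2}+xz+y^{3m+4}+\textstyle\sum_{i=0}^{m}\alpha_{i}\,y^{3m+3-i}}.
\]
Unlike the $J'_{6m+9}$ situation, no $z y^{k}$ terms appear here; the different weights place those monomials outside the relevant graded piece of $T^{1}$.

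With this stratum in hand, I would normalize the parameters $\alpha_{0},\ldots,\alpha_{m}$ using the $\C^{*}$-action $\phi(x)=\varepsilon^{12m+14}x,\ \phi(y)=\varepsilon^{6}y,\ \phi(z)=\varepsilon^{6m+10}z,\ \phi(w)=\varepsilon^{9m+12}w$, which preserves the first generator and multiplies the second by $\varepsilon^{d_{2}}$. Let $i_{0}\in\{0,\ldots,m\}$ be the largest index with $\alpha_{i_{0}}\neq 0$; factoring the perturbation as $y^{3m+3-i_{0}}\,\nu(y)$ with $\nu(0)=\alpha_{i_{0}}\neq 0$, the unit $\nu$ can be absorbed by a weighted change of the $y$-coordinate (compensated on $x$ and $z$ so that $xy+z^{2}$ is preserved), and then $\varepsilon$ is fixed so that the leading coefficient becomes $1$. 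This yields $J'_{6m+11,\,i_{0}+1}$ with normal form $\gen{xy+z^{2},\, w^{2}+xz+y^{3m+4}+y^{3m+3-i_{0}}}$, and letting $i_{0}$ range over $\{0,1,\ldots,m\}$ produces the $m+1$ singularities of the statement.

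The main obstacle I anticipate is the precise identification of the versal $\mu$-constant stratum: I expect it to require checking that no mixed monomials (such as $zy^{k}$, $wy^{k}$, or contributions involving $x$) contribute to the graded piece of $T^{1}$ of interest, and that the $m+1$ listed normal forms are pairwise non-isomorphic. Once these points are settled, the scaling step is a direct adaptation of the corresponding argument in Proposition~\ref{J6m+9}, and the Tjurina numbers $\tau=6m+10-i$ follow by a direct computation for each normal form.
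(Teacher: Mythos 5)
There is a genuine error in the key step of your argument: the identification of the versal $\mu$-constant stratum. Your weight computation is correct, $(w_{1},w_{2},w_{3},w_{4})=(12m+14,6,6m+10,9m+12)$ with $d_{2}=18m+24$, but it implies the opposite of what you then assert. The pure $y$-monomial $y^{3m+3-i}$ has weighted degree $6(3m+3-i)=18m+18-6i<d_{2}$, so it is a \emph{negative}-weight deformation and cannot lie in the $\mu$-constant stratum; indeed adding $y^{3m+3}$ to $w^{2}+xz+y^{3m+4}$ gives $w^{2}+xz+y^{3m+3}(1+y)$, which after a unit change of coordinates is $J'_{6m+9}$ with $\mu=6m+9\neq 6m+11$. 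By contrast, $zy^{3m+3-i}$ has degree $(6m+10)+6(3m+3-i)=24m+28-6i\geq d_{2}$ precisely for $i\le m$, so these mixed monomials are exactly the non-negative-weight deformation directions, giving the stratum $\gen{xy+z^{2},\,w^{2}+xz+y^{3m+4}+\sum_{i=0}^{m}\alpha_{i}y^{3m+3-i}z}$. This matches the pattern of Proposition \ref{J6m+9} (where the surviving terms are also $y^{3m+2-i}z$) and the entry for $J'_{6m+11,i+1}$ in Table \ref{Jnew}, which carries the factor $z$. The proposition as printed drops that $z$, which appears to be a typo; you took it at face value and constructed a justification (``no $zy^{k}$ terms appear here'') that contradicts your own degree count.

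Apart from this, your outline follows the paper's intended route (the paper's proof is literally ``done similarly to the proof of Proposition \ref{J6m+9}''): compute the versal deformation, restrict to non-negative weight, and normalize the top nonzero coefficient by the weighted $\C^{*}$-action, iterating over which $\alpha_{i}$ vanish to obtain the $m+1$ classes. If you replace $y^{3m+3-i}$ by $y^{3m+3-i}z$ throughout, the scaling step and the count of normal forms go through exactly as in Proposition \ref{J6m+9}, and the Tjurina numbers $\tau=6m+10-i$ can then be verified directly.
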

\begin{proof}
The proofs of Propositions \ref{J6m+10} and \ref{J6m+11} can be done similarly to the proof of Proposition \ref{J6m+9}.
\end{proof}
Summarizing the above results we have the following table
\begin{table}[H]
\[
\begin{array}{|c|c|c|c|}
  \hline
  Type & Normal Form  & \mu &\tau \\\hline
  J'_{6m+9,i+1}&\gen{xy+z^{2},xy+z^{2}+y^{3m+3}+zy^{3m+2-i}}&6m+9&6m+8-i  \\
  J'_{6m+10,i+1} &\gen{xy+z^{2},w^{2}+xz+zy^{2m+2}+y^{4m+4-i}} &6m+10 &6m+9-i \\
  J'_{6m+11,i+1} &\gen{xy+z^{2},w^{2}+xz+y^{3m+4}+y^{3m+3-i}z}&6m+11 &6m+10-i \\
               &for\,\,\,\ i=0,1,...,m.                &              &\\\hline
\end{array}
\]
\caption{}
\label{Jnew}
\end{table}
As a consequence of C.T.C. Wall's classification and Propositions \ref{J6m+9} - \ref{J6m+11} we obtain:
\begin{prop}
\label{Jcase}
Let $(V(\langle f,g\rangle ),0)\subseteq (\mathbb{C}^{4},0)$ be the germ of a complete intersection surface singularity. Assume it is not a hypersurface singularity and the $2$-jet of $\langle f,g\rangle$ has normal form $\langle xy+z2,w^{2}+xz\rangle$. $(V(\langle f,g\rangle ),0)$ is unimodular if and only if it is isomorphic to a complete intersection in Tables \ref{Jold} and \ref{Jnew}.
\end{prop}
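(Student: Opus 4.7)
The plan is to derive Proposition \ref{Jcase} by assembling two ingredients: Wall's original classification, which provides the $\mu$-constant strata attached to the 2-jet normal form $\langle xy+z^2, w^2+xz\rangle$, and the explicit normal form reductions carried out in Propositions \ref{J6m+9}, \ref{J6m+10}, and \ref{J6m+11}, which finish the classification inside each stratum. Because the earlier propositions do the real work, this final statement should be a short synthesis rather than a new calculation.

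For the ``if'' direction I would argue as follows. Every entry in Table \ref{Jold} is a unimodular complete intersection singularity by Wall's theorem. Each entry in Table \ref{Jnew} is obtained from a Wall representative by an element of its versal $\mu$-constant deformation, as exhibited inside the proofs of Propositions \ref{J6m+9}--\ref{J6m+11}. Since $\mu$-constant deformations do not change the modality stratum they sit in and every representative in Table \ref{Jold} is already unimodular, the deformed representatives in Table \ref{Jnew} are unimodular as well.

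For the ``only if'' direction I would start from an arbitrary non-hypersurface complete intersection surface singularity $(V(\langle f,g\rangle),0)$ with the given 2-jet, and invoke Wall's classification: the singularity is unimodular precisely when it lies in the $\mu$-constant stratum of one of the families $J'_{6m+9}$, $J'_{6m+10}$, $J'_{6m+11}$, $J'_{m+1,0}$, or $J'_{m+1,i}$ listed in Table \ref{Jold}. The versal $\mu$-constant deformations of these families were computed in Propositions \ref{J6m+9}--\ref{J6m+11}, and there the weighted homogeneous automorphism $\phi(x)=\varepsilon^{12m+10}x$, $\phi(y)=\varepsilon^{6}y$, $\phi(z)=\varepsilon^{6m+8}z$, $\phi(w)=\varepsilon^{9m+9}w$ was used to reduce each deformation parameter to a finite list of orbits, yielding exactly the normal forms in Tables \ref{Jold} and \ref{Jnew}. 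Hence $(V(\langle f,g\rangle),0)$ must be isomorphic to one of them.

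Since all nontrivial computations have already been dispatched in the earlier propositions, I do not expect a genuine obstacle here. The only thing to double-check is bookkeeping: that the indices $i=0,\ldots,m$ and the Milnor and Tjurina numbers in Table \ref{Jnew} precisely match the outputs of the iterative reduction in the proof of Proposition \ref{J6m+9}, and similarly for Propositions \ref{J6m+10} and \ref{J6m+11}. Once these match and one observes that no stratum has been overlooked in Wall's list for this 2-jet, the proof is complete.
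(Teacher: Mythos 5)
Your proposal is correct and matches the paper's treatment: the paper states Proposition \ref{Jcase} as a direct consequence of C.T.C. Wall's classification together with Propositions \ref{J6m+9}--\ref{J6m+11}, offering no further argument, and your two-direction synthesis (Wall's list supplies the $\mu$-constant strata, the earlier propositions exhaust each stratum via the versal $\mu$-constant deformation and the weighted-homogeneous coordinate changes) is exactly that reasoning made explicit.
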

We summarize our approach in this case in Algorithm \ref{Jsing}.
\begin{algorithm}[H]
\caption{$\texttt{J}^{'}$\texttt{singularity(I)}}
\label{Jsing}
\begin{algorithmic}[1]
\REQUIRE $I=\gen{f,g}\subseteq \gen{x,y,z,w}^2\mathbb{C}[[x,y,z,w]]$ such that $2$-jet of $I$\\
has normal form $\langle xy+z^{2},w^{2}+xz\rangle$
\ENSURE the type of the singularity
\vspace{0.1cm}
\STATE compute $\mu=$Milnor number of $I$, $\tau=$Tjurina number of $I$ and $B=$the singularity type of the strict transform of $I$ in the blowing up of $\gen{x,y,z,w}$
\IF{$\mu\equiv$ $3$ mod $6$ and $B=E[(\mu-15)+6]$}
\IF{$\mu=\tau$}
\RETURN $(J^{'}_{\mu})$;
\ELSE
\RETURN $(J^{'}_{\mu, \mu-\tau})$;
\ENDIF
\ENDIF
\IF{$\mu\equiv$ $4$ mod $6$ and $B=E[(\mu-16)+7]$}
\IF{$\mu=\tau$}
\RETURN $(J^{'}_{\mu})$;
\ELSE
\RETURN $(J^{'}_{\mu, \mu-\tau})$;
\ENDIF
\ENDIF
\IF{$\mu\equiv$ $5$ mod $6$ and $B=E[(\mu-17)+6]$}
\IF{$\mu=\tau$}
\RETURN $(J^{'}_{\mu})$;
\ELSE
\RETURN $(J^{'}_{\mu, \mu-\tau})$;
\ENDIF
\ENDIF
\IF{$\mu\equiv$ $1$ mod $6$ and $B=J[(\mu-13)/6,0]$ and $\mu=\tau$}
\RETURN $(J^{'}_{(\mu-7)/6+1,0})$;
\ENDIF
\IF{$\mu\neq \tau$ and $B=J[(\mu-13)/6,\mu-\tau-1]$}
\RETURN $(J^{'}_{(\mu-7)/6+1, \mu-\tau-1})$;
\ENDIF
\RETURN $not\quad unimodular$;
\end{algorithmic}
\end{algorithm}
\newpage
\subsection{$\textbf{K}'$ singularities}
Assume the $2-jet$ of $\gen{f,g}$ has normal form $\gen{xy+z^{2},w^{2}+x^{2}}$. In this case according to C.T.C.Wall's classification the unimodular surface singularities with Milnor number $\mu$ are given in the table below.
\begin{table}[H]
\[
\begin{array}{|l|c|c|c|}
\hline Name & Normal form &\mu &\tau \\
\hline  K^{'}_{10}   & xy+z^{2},w^{2}+x^{2}+y^{3} &10 &10\\\hline
       K^{'}_{11}   & xy+z^{2},x^{2}+w^{2}+zy^{2}&11& 11 \\\hline
       K^{'}_{1,0}  & xy+z^{2},x^{2}+w^{2}+z^{2}y+\lambda y^{4} \,\,\,\,(\lambda \neq 0,\frac{1}{4})&13&13 \\\hline
       K^{'}_{1,i}  & xy+z^{2},x^{2}+w^{2}+z^{2}y+ y^{4+i}&13+i&13+i-2\\\hline
       K^{b}_{1,i}  & xy+z^{2},x^{2}+w^{2}+2z^{2}y+ y^{4}+zyI_{i}(z,y)&13+i&13+i-2\\\hline
       K^{'}_{15}   & xy+z^{2},x^{2}+w^{2}+zy^{3}&15&15 \\\hline
       K^{'}_{16}   & xy+z^{2},x^{2}+w^{2}+y^{5}&16&16 \\
\hline
\end{array}
\]
\caption{}
\label{Kold}
\end{table}

\begin{prop}
\label{K10}
The unimodular complete intersection surface singularities with Milnor number $\mu=10$   are $K^{'}_{10}$ with Tjurina number $\tau=10$ defined by the ideal  \[\gen{xy+z^{2},w^{2}+x^{2}+y^{3}}\] and $K^{'}_{10,1}$ with Tjurina number $\tau=9$ defined by the ideal
\[\gen{xy+z^{2},w^{2}+x^{2}+y^{3}+yz^{2}}.\]
\end{prop}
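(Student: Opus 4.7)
The plan is to follow the pattern established in the proofs of Propositions~\ref{I1} and~\ref{J6m+9}. First, I would invoke Wall's classification: $K'_{10}$ is the singularity defined by the ideal $\langle xy+z^{2},\,w^{2}+x^{2}+y^{3}\rangle$ with $\mu=\tau=10$, and I would note that it is weighted homogeneous with weights $(w_{1},w_{2},w_{3},w_{4})=(6,4,5,6)$ on $(x,y,z,w)$ and degrees $(d_{1},d_{2})=(10,12)$ on the two generators.

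Second, I would compute the versal deformation by producing a $\mathbb{C}$-basis of $T^{1}$ for this ICIS and then extract the versal $\mu$-constant deformation as the part supported in weighted degrees $\geq d_{i}$ in the $i$-th component. Using the relation $z^{2}\equiv -xy$ modulo the first generator, several candidate monomials in the second component are identified (in particular, $yz^{2}\equiv -xy^{2}$); after additionally discarding directions absorbable by weighted-homogeneous coordinate changes and by adding multiples of the first generator, one expects exactly one essential parameter to survive, giving
\[
\langle xy+z^{2},\ w^{2}+x^{2}+y^{3}+\lambda_{1}\,yz^{2}\rangle
\]
as the versal $\mu$-constant deformation. Here $yz^{2}$ has weighted degree $4+10=14>12$, so it lies in the $\mu$-constant direction of negative parameter weight.

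Third, I would normalize $\lambda_{1}$ using the $\mathbb{C}^{*}$-action coming from the weights. The coordinate change $x\mapsto\xi^{6}x$, $y\mapsto\xi^{4}y$, $z\mapsto\xi^{5}z$, $w\mapsto\xi^{6}w$ rescales the first generator by $\xi^{10}$ and the unperturbed second generator by $\xi^{12}$, while $yz^{2}$ scales by $\xi^{14}$. After extracting the common factor $\xi^{12}$ from the second equation, the effective coefficient of the perturbation becomes $\lambda_{1}\xi^{2}$, so for $\lambda_{1}\neq 0$ one chooses $\xi$ with $\lambda_{1}\xi^{2}=1$ and arrives at the claimed normal form
\[
\langle xy+z^{2},\ w^{2}+x^{2}+y^{3}+yz^{2}\rangle.
\]
A direct computer algebra calculation of the Tjurina number then yields $\tau=9$, showing that this new representative is not contact equivalent to $K'_{10}$ and completing the classification at Milnor number $10$.

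The main obstacle is the step of correctly extracting the versal $\mu$-constant deformation: one must verify that among all monomial perturbations of weighted degree $\geq 12$ in the second component (such as $xw$, $y^{2}z$, $xy^{2}$, $y^{3+k}$, $yz^{2}$, and so on), those that are equivalent modulo the contact tangent space together with the relation $z^{2}+xy\equiv 0$ collapse to a single essential direction, which may be represented by $yz^{2}$. Once this bookkeeping of $T^{1}$ modulo trivial and ideal-absorbable directions is carried out, the $\mathbb{C}^{*}$-rescaling and the concluding $\tau$-computation are short and mechanical.
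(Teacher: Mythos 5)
Your proposal matches the paper's proof essentially step for step: it cites Wall's normal form for $K'_{10}$, identifies the versal $\mu$-constant deformation as $\langle xy+z^{2},\,w^{2}+x^{2}+y^{3}+\lambda_{1}yz^{2}\rangle$ using the same weights $(6,4,5,6)$ and degrees $(10,12)$, and normalizes $\lambda_{1}$ via the coordinate change $x\mapsto\xi^{6}x$, $y\mapsto\xi^{4}y$, $z\mapsto\xi^{5}z$, $w\mapsto\xi^{6}w$ with $\lambda_{1}\xi^{2}=1$, then verifies $\tau=9$. This is the same argument as in the paper.
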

\begin{proof}
In the list of C.T.C Wall $K^{'}_{10}$ is the singularity defined by the ideal \[\gen{xy+z^{2},w^{2}+x^{2}+y^{3} }\] with Milnor number $\mu=10$ and Tjurina number $ \tau=10$.
The versal deformation of $K^{'}_{10}$ is
\[\gen{ xy+z^{2}+t_{1}w+t_{2}y+t_{3},x^{2}+w^{2}+y^{3}+\lambda_{1}yz^{2}+\lambda_{2}z^{2}+
\lambda_{3}yz+\lambda_{4}z+\lambda_{5}y^{2}+\lambda_{6}y+\lambda_{7}}.\]
$K^{'}_{10}$ defines a weighted homogenous isolated complete intersection singularity with \[(w_{1},w_{2},w_{3},w_{4}) = (6,4,5,6)\,\, and \,\,the\,\, degrees\,\, (d_{1},d_{2})=(10,12).\]
The versal $\mu$-constant deformation of $K^{'}_{10,1}$ is given by $\langle xy+z^{2},w^{2}+x^{2}+y^{3}+\lambda_{1}yz^{2} \rangle$. Using the coordinate change
$x\rightarrow \xi^{6}x$, $y\rightarrow \xi^{4}y$, $z\rightarrow \xi^{5}z$ and $w\rightarrow \xi^{6}w$, we obtain
\[\gen{xy+z^{2},w^{2}+x^{2}+y^{3}+\lambda_{1}\xi^{2}yz^{2}}.\]
Choosing $\xi$ such that $\lambda_{1}\xi^{2}= 1$,
we obtain \[\gen{xy+z^{2},w^{2}+x^{2}+y^{3}+yz^{2}}\] with Tjurina number $\tau = 9$.
\end{proof}
\begin{prop}
\label{K11}
The unimodular complete intersection surface singularities with Milnor number $\mu=11$ are $K^{'}_{11}$ with Tjurina number $\tau=11$ defined by the ideal \[\gen{xy+z^{2},x^{2}+w^{2}+zy^{2}}\] and $K^{'}_{11,1}$ with Tjurina number $\tau=10$ defined by the ideal \[\gen{xy+z^{2},x^{2}+w^{2}+zy^{2}+y^{5}}.\]
\end{prop}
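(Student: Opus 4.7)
The plan is to follow the same template used for Proposition \ref{K10}. I would begin from Wall's list, where $K'_{11}$ is the singularity defined by
\[
\langle xy+z^{2},\; x^{2}+w^{2}+zy^{2}\rangle
\]
with $\mu=\tau=11$. The first step is to write down the versal deformation explicitly by extending the generators by a monomial basis of $T^{1}$ (those terms not lying in the Jacobian module); this produces a family of ideals of the form $\langle xy+z^{2}+\sum t_{j}m_{j},\; x^{2}+w^{2}+zy^{2}+\sum \lambda_{i}n_{i}\rangle$ for suitable monomials $m_{j},n_{i}$.

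The key observation is that $K'_{11}$ is weighted homogeneous: solving the relations imposed by $xy+z^{2}$ and $x^{2}+w^{2}+zy^{2}$ gives weights $(w_{1},w_{2},w_{3},w_{4})=(5,3,4,5)$ and degrees $(d_{1},d_{2})=(8,10)$. Next I would restrict the versal deformation to its $\mu$-constant stratum by discarding all parameters whose associated monomial has weighted degree strictly less than the degree of the generator it is attached to; the expectation, parallel to the $K'_{10}$ case, is that this leaves a single effective parameter, yielding the versal $\mu$-constant deformation
\[
\langle xy+z^{2},\; x^{2}+w^{2}+zy^{2}+\lambda_{1}y^{5}\rangle,
\]
since $y^{5}$ has weighted degree $15$, which exceeds $d_{2}=10$.

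The final step is to normalize the parameter $\lambda_{1}$ (assumed nonzero, since $\lambda_{1}=0$ recovers $K'_{11}$). Using the weighted scaling $x\mapsto\xi^{5}x,\ y\mapsto\xi^{3}y,\ z\mapsto\xi^{4}z,\ w\mapsto\xi^{5}w$, both generators transform by a common scalar on their homogeneous part, while the correction term scales as $\xi^{15}\lambda_{1}y^{5}$. After factoring out $\xi^{10}$ from the second generator, the coefficient of $y^{5}$ becomes $\xi^{5}\lambda_{1}$, which can be set equal to $1$ by a suitable choice of $\xi\in\mathbb{C}^{*}$. This produces the normal form
\[
\langle xy+z^{2},\; x^{2}+w^{2}+zy^{2}+y^{5}\rangle,
\]
and a direct computation (e.g.\ in \singular) confirms $\tau=10$, so this representative is genuinely non-isomorphic to $K'_{11}$.

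The only step that requires care is the identification of the versal $\mu$-constant deformation: one must check that every other degree-preserving perturbation can be absorbed by a right-equivalence, so that $y^{5}$ is really the unique surviving direction. This is the routine but slightly tedious part, handled just as in the proof of Proposition \ref{J6m+9}. Everything else is a direct weighted-homogeneous rescaling argument mirroring Proposition \ref{K10}.
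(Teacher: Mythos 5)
Your proposal is correct and follows essentially the same route as the paper, which proves Proposition \ref{K11} by mirroring the argument for $K'_{10}$: compute the versal deformation, identify the weights $(5,3,4,5)$ and degrees $(8,10)$, observe that the versal $\mu$-constant deformation is $\langle xy+z^{2},\,x^{2}+w^{2}+zy^{2}+\lambda_{1}y^{5}\rangle$, and normalize $\lambda_{1}$ by the weighted rescaling $\xi^{5}\lambda_{1}=1$. Your weight computation and scaling exponent check out against the paper's stated normal form and Tjurina number $\tau=10$.
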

\begin{proof}
The proof of Proposition \ref{K11} can be done similarly to the proof of Proposition \ref{K10}.
\end{proof}
\begin{prop}
The unimodular complete intersection surface singularities with Milnor number $\mu=13$   are $K^{'}_{1,0}$ with Tjurina number $\tau=13$ defined by the ideal
\[\gen{xy+z^{2},x^{2}+w^{2}+z^{2}y+\lambda y^{4}},\,\,\,\,\,\,\,\,\lambda \neq 0,\frac{1}{4}.\]
 $K^{'}_{1,0,1}$ with Tjurina number $\tau=12$ defined by the ideal
 \[\gen{ xy+z^{2},x^{2}+w^{2}+z^{2}y+\lambda y^{4}+y^{5}}.\]
\end{prop}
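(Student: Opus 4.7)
The plan is to follow the strategy used in Propositions \ref{K10} and \ref{K11}. Starting from Wall's representative $K'_{1,0}$, defined by $\langle xy+z^{2},\,x^{2}+w^{2}+z^{2}y+\lambda y^{4}\rangle$ with $\mu=\tau=13$, the first step is to write down the full versal deformation by computing a basis of the module $T^{1}$ of first-order deformations.

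The second step is to exploit weighted homogeneity. Assigning $(w_{1},w_{2},w_{3},w_{4})=(4,2,3,4)$ to $(x,y,z,w)$ makes the two generators homogeneous of degrees $(d_{1},d_{2})=(6,8)$ respectively. The versal $\mu$-constant deformation is then isolated by keeping only those deformation directions whose weighted degree strictly exceeds the degree of the generator they modify. A direct analysis in the Tjurina algebra shows that this leaves a one-parameter family of the form
\[
\langle xy+z^{2},\; x^{2}+w^{2}+z^{2}y+\lambda y^{4}+\alpha y^{5}\rangle.
\]

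The third step is to normalize the parameter $\alpha$. Define $\phi\in\mathrm{Aut}_{\mathbb{C}}(\mathbb{C}[[x,y,z,w]])$ by $\phi(x)=\xi^{4}x$, $\phi(y)=\xi^{2}y$, $\phi(z)=\xi^{3}z$, $\phi(w)=\xi^{4}w$. Under $\phi$ every monomial of weighted degree $8$ in the second generator picks up a common factor $\xi^{8}$, whereas $\alpha y^{5}$ acquires $\alpha\xi^{10}$. Dividing through by $\xi^{8}$ replaces the coefficient of $y^{5}$ by $\alpha\xi^{2}$, which can be set to $1$ by a suitable choice of $\xi$. A final \singular{} computation then confirms that $\tau=12$ for the resulting ideal $\langle xy+z^{2},x^{2}+w^{2}+z^{2}y+\lambda y^{4}+y^{5}\rangle$, showing it is not isomorphic to $K'_{1,0}$.

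The main obstacle lies in identifying the $\mu$-constant stratum precisely: among all monomials of weighted degree greater than $8$ that a priori appear in the versal deformation (such as $y^{6}$, $zy^{4}$, $xy^{3}$, $wy^{3}$, $\ldots$), one must verify that, modulo the Jacobian ideal and modulo coordinate changes respecting the first generator $xy+z^{2}$, only the direction spanned by $y^{5}$ survives. This is the step that makes the appearance of the single extra family $K'_{1,0,1}$ genuinely canonical, and it is here that the explicit weighted-degree bookkeeping has to be carried out with care.
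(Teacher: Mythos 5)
Your proposal follows essentially the same route as the paper's proof: identify Wall's representative, compute the versal deformation, use the weighted homogeneity with weights $(4,2,3,4)$ and degrees $(6,8)$ to isolate the versal $\mu$-constant deformation as the single direction $y^{5}$, and then normalize its coefficient to $1$ via the scaling automorphism $\xi$ (the paper's choice $\lambda_{5}\xi^{2}=1$ is exactly your $\alpha\xi^{2}=1$), concluding $\tau=12$. Your closing remark correctly flags the one step the paper leaves implicit, namely verifying that $y^{5}$ is the only surviving $\mu$-constant direction, but the overall argument is the same.
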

\begin{proof}
In C.T.C's Wall list $K^{'}_{1,0}$ is the singularity defined by the ideal
\[\gen{ xy+z^{2},x^{2}+w^{2}+z^{2}y+\lambda y^{4}}\,\,\,\,\,\,\lambda \neq 0,\frac{1}{4}\]
with Milnor number $\mu=13$ and Tjurina number $ \tau=13.$
The versal deformation of $K^{'}_{1,0}$ is given by
\begin{dmath*}{}
\gen{xy+z^{2}+t_{1}w+t_{2}y+t_{3},x^{2}+w^{2}+z^{2}y+\lambda y^{4}+\lambda_{1}z^{2}+\lambda_{2}y^{2}z+\lambda_{3}yz+\lambda_{4}z+\lambda_{5}y^{5}+\lambda_{6}y^{4}+\lambda_{7}y^{3}+
\lambda_{8}y^{2}+\lambda_{9}y+\lambda_{10}}
\end{dmath*}
$K^{'}_{1,0}$ defines a weighted homogenous isolated complete intersection singularity with weights \[(w_{1},w_{2},w_{3},w_{4}) = (4,2,3,4)\]
 and the degrees \[(d_{1},d_{2})=(6,8).\]
The versal $\mu$-constant deformation of $K^{'}_{1,0}$ is given by
\[\gen{xy+z^{2},x^{2}+w^{2}+z^{2}y+\lambda y^{4}+\lambda_{5}y^{5}}.\]
Using the coordinate change $ x\rightarrow \xi^{4}x$, $y\rightarrow \xi^{2}y$, $z\rightarrow \xi^{3}z$,$ w\rightarrow \xi^{4}w$, we obtain
\[ \gen{xy+z^{2},x^{2}+w^{2}+z^{2}y+\lambda y^{4}+\lambda_{5}\xi^{2}y^{5}}.\]
Choosing $\xi$ such that $\lambda_{5}\xi^{2}= 1$ we obtain
\[\gen{xy+z^{2},x^{2}+w^{2}+z^{2}y+\lambda y^{4}+y^{5} }\]with Tjurina number $\tau = 12$.
\end{proof}
\begin{prop}
\label{K15}
The unimodular complete intersection surface singularities with Milnor number $\mu=15$ are $K^{'}_{15}$ with Tjurina number $\tau=15$ defined by the ideal \[\gen{xy+z^{2},x^{2}+w^{2}+zy^{3}}\] and $K^{'}_{15,1}$ with Tjurina number $\tau=14$ defined by the ideal \[\gen{xy+z^{2},x^{2}+w^{2}+zy^{2}+y^{6}}.\]
and $K^{'}_{15,2}$ with Tjurina number $\tau=13$ defined by the ideal
\[\gen{xy+z^{2},x^{2}+w^{2}+zy^{3}+y^{5}}.\]
\end{prop}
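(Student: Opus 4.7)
The plan is to follow the same template used in the proofs of Propositions~\ref{K10} and~\ref{K11}. First I would verify that $K'_{15}$, defined by $\langle xy+z^{2},\ x^{2}+w^{2}+zy^{3}\rangle$, is weighted homogeneous. Imposing equal weighted degree on $xy$ and $z^{2}$ gives $w_{1}+w_{2}=2w_{3}$, and balancing $x^{2}$, $w^{2}$ and $zy^{3}$ forces $2w_{1}=2w_{4}=w_{3}+3w_{2}$. Solving yields weights $(w_{1},w_{2},w_{3},w_{4})=(7,3,5,7)$ and degrees $(d_{1},d_{2})=(10,14)$; a direct Milnor/Tjurina computation in \singular\ confirms $\mu=\tau=15$.

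The next step is to write down the full versal deformation and extract from it the versal $\mu$-constant deformation. Because $K'_{15}$ is weighted homogeneous, the $\mu$-constant stratum is spanned by the classes of monomial perturbations whose weighted degree strictly exceeds the weighted degree of the generator into which they are inserted. Reducing monomials that contain $x$ or a factor $z^{2}$ via $z^{2}\equiv -xy$ modulo the first generator, I expect only finitely many candidate monomials in $y,z$ to remain. The anticipated versal $\mu$-constant family is
\[
\langle xy+z^{2},\ x^{2}+w^{2}+zy^{3}+\alpha_{1}y^{5}+\alpha_{2}y^{6}\rangle ,
\]
with the two dominant admissible perturbations being $y^{5}$ (weighted degree $15$) and $y^{6}$ (weighted degree $18$).

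The last step is a $\mathbb{C}^{*}$-scaling argument. Under $\phi\colon x\mapsto \xi^{7}x,\ y\mapsto \xi^{3}y,\ z\mapsto \xi^{5}z,\ w\mapsto \xi^{7}w$, the first generator picks up $\xi^{10}$ and the second $\xi^{14}$, while $y^{5}$ is multiplied by $\xi^{15-14}=\xi$ and $y^{6}$ by $\xi^{18-14}=\xi^{4}$ relative to the second generator. A case split now yields the three claimed forms: $(\alpha_{1},\alpha_{2})=(0,0)$ retains $K'_{15}$; if $\alpha_{1}\neq 0$ we choose $\xi$ with $\alpha_{1}\xi=1$, absorb any residual $y^{6}$-term by an analytic change of coordinates preserving $xy+z^{2}$, and reach $K'_{15,2}$; if $\alpha_{1}=0$ and $\alpha_{2}\neq 0$ we choose $\xi$ with $\alpha_{2}\xi^{4}=1$ to reach $K'_{15,1}$. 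To conclude, I would compute $\tau$ for each of the two perturbed ideals and confirm the values $13$ and $14$, showing that the three normal forms are pairwise non-isomorphic.

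The main obstacle I anticipate is the middle step: correctly pinning down the versal $\mu$-constant base requires showing that every a priori admissible perturbation other than $y^{5}$ and $y^{6}$ (e.g.\ $y^{4}z$, $y^{2}z^{2}$, and any mixed monomial of weighted degree $>14$) can be absorbed either by tangent vectors in the Jacobian module of the unfolding or by a coordinate change that preserves $xy+z^{2}$. The remaining scaling/normalization steps are then routine, paralleling the arguments in Propositions~\ref{K10} and~\ref{K11}.
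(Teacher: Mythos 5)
Your proposal is correct and follows essentially the same route as the paper: the same weights $(7,3,5,7)$ and degrees $(10,14)$, the same versal $\mu$-constant family $\langle xy+z^{2},\,x^{2}+w^{2}+zy^{3}+\alpha_{1}y^{5}+\alpha_{2}y^{6}\rangle$, and the same $\mathbb{C}^{*}$-scaling case split on whether the $y^{5}$ coefficient vanishes (the paper absorbs the residual $y^{6}$ term by writing the coefficient as a unit $u=\lambda_{5}y+\lambda_{6}$, which is the same device you describe). No substantive difference.
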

\begin{proof}
In C.T.C. Wall's list $K^{'}_{15}$ is the singularity defined by the ideal
\[\gen{xy+z^{2},x^{2}+w^{2}+zy^{3}}.\]
with Milnor number $\mu=15$ and Tjurina number $\tau=15$. The versal deformation of $K^{'}_{15}$ is given by
\begin{dmath*}{}
\gen{xy+z^{2}+t_{1}w+t_{2}y+t_{3},x^{2}+w^{2}+zy^{2}+\lambda_{1}yz^{2}+
\lambda_{2}z^{2}+\lambda_{3}yz+\lambda_{4}z+
\lambda_{5}y^{6}+\lambda_{6}y^{5}+\lambda_{7}y^{4}+
\lambda_{8}y^{3}+\lambda_{9}y^{2}+\lambda_{10}y+\lambda_{11}}.
\end{dmath*}
$K^{'}_{15}$ defines a weighted homogenous isolated complete intersection singularity with weights \[(w_{1},w_{2},w_{3},w_{4}) = (7,3,5,7)\]
and the degrees \[(d_{1},d_{2})=(10,14).\]
The versal $\mu$-constant deformation of $K^{'}_{15}$ is given by
\[\gen{ xy+z^{2},x^{2}+w^{2}+zy^{3}+\lambda_{5}y^{6}+\lambda_{6}y^{5}}.\]
If $\lambda_{6}\neq 0$
then
\[I= \gen{xy+z^{2},x^{2}+w^{2}+zy^{3}+uy^{5}},\,\, where\,\, u = \lambda_{5}y+\lambda_{6}.\]
Using coordinate change such that $x\rightarrow \xi^{7}x$, $y\rightarrow \xi^{3}y$, $z\rightarrow \xi^{5}z$
and $w\rightarrow \xi^{7}w$ we may assume
\[I= \gen{xy+z^{2},x^{2}+w^{2}+zy^{3}+\xi \bar{u}y^{5}}.\]
Choosing $\xi$ such that $\xi \bar{u} = 1$ we obtain
\[I =\gen{xy+z^{2},x^{2}+w^{2}+zy^{3}+y^{5}}\]
with Tjurina number $\tau=13$. If $\lambda_{6}= 0$  then again applying the same coordinate change we obtain
\[I = \langle xy+z^{2},x^{2}+w^{2}+zy^{3}+y^{6} \rangle\]
by choosing $\lambda_{5}\xi^{4}= 1$ with Tjurina number $\tau =14$.
\end{proof}
\begin{prop}
\label{K16}
The unimodular complete intersection surface singularities with Milnor number $\mu=16$   are $K^{'}_{16}$ with Tjurina number $\tau=16$ defined by the ideal
\[\gen{xy+z^{2},x^{2}+w^{2}+y^{5}}.\]
$K^{'}_{16,1}$ with Tjurina number $\tau=15$ defined by the ideal
\[\gen{xy+z^{2},x^{2}+w^{2}+y^{5}+z^{2}y^{3}}.\]
and $K^{'}_{16,2}$ with Tjurina number $\tau=14$ defined by the ideal
\[\gen{xy+z^{2},x^{2}+w^{2}+y^{5}+z^{2}y^{2}}.\]
\end{prop}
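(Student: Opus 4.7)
The plan is to follow exactly the template established in the proofs of $K'_{10}$, $K'_{11}$, $K'_{1,0}$, and especially $K'_{15}$. Start from Wall's normal form $K'_{16} = \langle xy+z^{2}, x^{2}+w^{2}+y^{5}\rangle$ with $\mu=\tau=16$. Observe that this ideal defines a weighted homogeneous isolated complete intersection singularity with weights $(w_{1},w_{2},w_{3},w_{4})=(10,4,7,10)$ and degrees $(d_{1},d_{2})=(14,20)$: indeed $xy$ and $z^{2}$ both have weight $14$, while $x^{2}$, $w^{2}$, and $y^{5}$ all have weight $20$.

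Next, I would compute the versal deformation of $K'_{16}$, for instance using \singular, and extract from it the versal $\mu$-constant deformation by keeping only the monomials whose weight strictly exceeds the relevant degree. Since $K'_{16,1}$ adds $z^{2}y^{3}$ (weight $26$) and $K'_{16,2}$ adds $z^{2}y^{2}$ (weight $22$), and both exceed $d_{2}=20$, I expect the versal $\mu$-constant deformation to take the form
\[
\langle xy+z^{2},\; x^{2}+w^{2}+y^{5}+\lambda_{1}z^{2}y^{2}+\lambda_{2}z^{2}y^{3}\rangle,
\]
after reducing higher monomials modulo the first equation $xy+z^{2}$. Verifying that these are indeed the only surviving $\mu$-constant directions is the first substantive step and will probably require a direct Tjurina-algebra computation in \singular.

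Having fixed the versal $\mu$-constant family, introduce the automorphism $\phi\in\operatorname{Aut}_{\C}(\C[[x,y,z,w]])$ defined by $\phi(x)=\xi^{10}x$, $\phi(y)=\xi^{4}y$, $\phi(z)=\xi^{7}z$, $\phi(w)=\xi^{10}w$. Under $\phi$ the monomial $z^{2}y^{2}$ scales by $\xi^{22}$ and $z^{2}y^{3}$ by $\xi^{26}$, while the two generators of the ideal scale by $\xi^{14}$ and $\xi^{20}$ respectively. After dividing each generator by its weighted factor, the extra terms pick up the factors $\xi^{2}$ and $\xi^{6}$. I now split into cases exactly as in the proof of Proposition~\ref{K15}:
\begin{itemize}
\item If $\lambda_{1}\neq 0$, collect $\lambda_{1}z^{2}y^{2}+\lambda_{2}z^{2}y^{3}=z^{2}y^{2}u$ with $u=\lambda_{1}+\lambda_{2}y$ a unit, and choose $\xi$ so that $\xi^{2}\bar{u}=1$. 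This normalises the ideal to $\langle xy+z^{2},x^{2}+w^{2}+y^{5}+z^{2}y^{2}\rangle$, giving $K'_{16,2}$ with $\tau=14$.
\item If $\lambda_{1}=0$ and $\lambda_{2}\neq 0$, choose $\xi$ with $\lambda_{2}\xi^{6}=1$, obtaining $\langle xy+z^{2},x^{2}+w^{2}+y^{5}+z^{2}y^{3}\rangle$, which is $K'_{16,1}$ with $\tau=15$.
\item If $\lambda_{1}=\lambda_{2}=0$, the deformation is trivial and we recover $K'_{16}$ with $\tau=16$.
\end{itemize}

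The main obstacle is identifying the correct versal $\mu$-constant deformation, i.e.\ making sure that no further weighted monomials (such as $y^{k}$, $zy^{k}$, or other products of higher weight) must appear as independent parameters; this demands an explicit examination of the Tjurina algebra $T^{1}$ in the positive-weight part. Once the $\mu$-constant family is confirmed to be spanned by $z^{2}y^{2}$ and $z^{2}y^{3}$, the remaining scaling argument is routine, and the final Tjurina numbers $\tau=14,15,16$ are verified by a direct computation from the three normal forms.
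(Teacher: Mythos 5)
Your proposal is correct and is essentially the paper's own argument: the paper merely states that Proposition~\ref{K16} ``can be done similarly to the proof of Proposition~\ref{K15}'', and you have carried out exactly that template, with the correct weights $(10,4,7,10)$, degrees $(14,20)$, the two positive-weight $\mu$-constant directions $z^{2}y^{2}$, $z^{2}y^{3}$, and the same case split on the lowest-weight nonzero coefficient yielding $\tau=14,15,16$.
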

\newpage
\begin{proof}
The proof of Proposition \ref{K16} can be done similarly to the proof of Proposition \ref{K15}.
\end{proof}

Summarizing the results of the above propositions we complete the list of unimodular complete intersection
singularities in case of $\langle f,g \rangle$ having 2-jet with normal forms $\gen{xy+z^{2},w^{2}+x^{2}}$.
\begin{table}[H]
\[
\begin{array}{|l|c|c|c|}\hline
Type &Normal form & \mu  &\tau\\\hline
K_{10,1}^{'}   &   (xy+z^{2},w^{2}+x^{2}+y^{3}+yz^{2})    &  10    & 9 \\\hline
K_{11,1}^{'}   &   (xy+z^{2}, w^{2}+x^{2}+zy^{2}+y^{5})   & 11   &   10   \\\hline
K_{1,0,1}^{'}  &   (xy+z^{2}, w^{2}+x^{2}+yz^{2}+\lambda y^{4}+y^{5})& 13  &   12\\\hline
K_{15,1}^{'}   &   (xy+z^{2}, w^{2}+x^{2}+zy^{3}+y^{6})   & 15   &   14   \\\hline
K_{15,2}^{'}   &   (xy+z^{2}, w^{2}+x^{2}+zy^{3}+y^{5})   & 15   &   13   \\\hline
K_{16,1}^{'}   &   (xy+z^{2}, w^{2}+x^{2}+y^{5}+y^{3}z^{2})   &    16      &   15\\\hline
K_{16,2}^{'}   &   (xy+z^{2}, w^{2}+x^{2}+y^{5}+y^{2}z^{2}) & 16     &   14  \\\hline

\end{array}
\]
\caption{}
\label{Knew}
\end{table}
\begin{prop}
\label{Jcase}
Let $(V(\langle f,g\rangle ),0)\subseteq (\mathbb{C}^{4},0)$ be the germ of a complete intersection surface singularity. Assume it is not a hypersurface singularity and the $2$-jet of $\langle f,g\rangle$ has normal form $\langle xy+z^{2},w^{2}+x^{2}\rangle$. $(V(\langle f,g\rangle ),0)$ is unimodular if and only if it is isomorphic to a complete intersection in Tables \ref{Kold} and \ref{Knew}.
\end{prop}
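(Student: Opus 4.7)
The plan is to mimic the proof of the analogous proposition for $J'$ singularities (Proposition \ref{Jcase}): the statement is a direct consequence of Wall's classification combined with Propositions \ref{K10}, \ref{K11}, \ref{K15}, \ref{K16} and the intervening proposition that treats $K'_{1,0}$. Wall's list, reproduced as Table \ref{Kold}, exhausts all $\mu$-constant strata of unimodular complete intersection surface germs whose $2$-jet has the prescribed normal form $\gen{xy+z^{2},w^{2}+x^{2}}$, so every unimodular germ with this $2$-jet necessarily lies in the stratum of one of $K'_{10}$, $K'_{11}$, $K'_{1,0}$, $K'_{1,i}$, $K^{b}_{1,i}$, $K'_{15}$ or $K'_{16}$.

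For the forward direction I would argue stratum by stratum. Given a unimodular germ $\gen{f,g}$ with the given $2$-jet, Wall's theorem places it in one of the strata above. In the isolated strata (Milnor numbers $10$, $11$, $13$, $15$ and $16$) the propositions just proved compute the versal $\mu$-constant deformation of the Wall representative and, using the weighted-homogeneous scaling available on that representative, normalise every nonzero perturbation to a single standard form distinguished by its Tjurina number. The outputs of these normalisations are precisely the additional entries of Table \ref{Knew}, so the germ must be isomorphic to a member of Table \ref{Kold} or Table \ref{Knew}. The infinite families $K'_{1,i}$ and $K^{b}_{1,i}$ require no further normalisation, since within these Wall strata the Tjurina number already varies, so they themselves are complete isomorphism classes.

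For the converse, the Table \ref{Kold} entries are unimodular by Wall, while each entry of Table \ref{Knew} is exhibited in the preceding propositions as a $\mu$-constant deformation of a Table \ref{Kold} representative; modality is upper semi-continuous along such deformations, so these entries inherit modality at most one, and Wall's stratum forbids lower modality. The main obstacle is purely bookkeeping: one must verify that Propositions \ref{K10}, \ref{K11}, \ref{K15}, \ref{K16} and the proposition on $K'_{1,0}$ collectively handle every Wall stratum and that their weighted-homogeneous normalisations exhaust every nonzero direction of each versal $\mu$-constant deformation. Once that verification is in hand, the present proposition follows as a formal summary of Wall's work and the propositions above.
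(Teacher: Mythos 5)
Your proposal is correct and follows essentially the same route as the paper, which disposes of this proposition in one line as a direct consequence of C.~T.~C.~Wall's classification together with Propositions \ref{K10}--\ref{K16}. The extra detail you supply (stratum-by-stratum exhaustion, semicontinuity of modality for the converse) is a faithful elaboration of that same argument rather than a different method.
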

\begin{proof}
The proof is a direct consequence of C.T.C. Wall's classification and Propositions \ref{K10} - \ref{K16}
\end{proof}
\begin{prop}
Let $ (\mathbf{V}(\gen{f,g}),0)\subseteq (\mathbb{C}^{4},0)$ be the germ of a complete intersection surface singularity. Assume it is not hypersurface singularity and the two jet of $\gen{f,g}$ has normal form $\gen{xy+z^{2},w^{2}+xz}$. $\mathbf{V}(\gen{f,g},0)$ is unimodular if and only if it is isomorphic to a complete intersection in table $3$.
\end{prop}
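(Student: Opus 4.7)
The plan is to argue exactly as in the preceding \emph{Jcase} proposition: this statement is essentially the same assertion, with ``table $3$'' referring to the combined content of Tables~\ref{Jold} and~\ref{Jnew}. The strategy is therefore to invoke Wall's classification to pin down the $\mu$-constant stratum and then use the three propositions \ref{J6m+9}, \ref{J6m+10}, \ref{J6m+11} to enumerate the finitely many isomorphism classes sitting inside each stratum.

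First, I would observe that by Wall's classification \cite{Wal83}, any complete intersection surface singularity $(\mathbf{V}(\gen{f,g}),0)$ whose $2$-jet is $\gen{xy+z^{2},w^{2}+xz}$ is either a hypersurface singularity (excluded by hypothesis) or belongs to one of the $\mu$-constant strata whose representatives are precisely the entries of Table~\ref{Jold}, i.e.\ one of $J'_{6m+9}$, $J'_{6m+10}$, $J'_{6m+11}$, $J'_{m+1,0}$, $J'_{m+1,i}$. This reduces the ``only if'' direction to classifying germs inside each stratum.

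Next, within each stratum I would apply the corresponding normal-form proposition. Proposition~\ref{J6m+9} computes the versal $\mu$-constant deformation of $J'_{6m+9}$ and exploits the weighted homogeneity $(w_{1},w_{2},w_{3},w_{4})=(12m+10,6,6m+8,9m+9)$ with degrees $(12m+16,18m+18)$ to normalize successive deformation parameters, producing exactly the $m+1$ additional isomorphism classes $J'_{6m+9,i+1}$ of Table~\ref{Jnew}. Propositions~\ref{J6m+10} and~\ref{J6m+11} give the analogous lists for the strata of $J'_{6m+10}$ and $J'_{6m+11}$; together they exhaust every new unimodular germ that sits above a Wall representative with the given $2$-jet. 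The ``if'' direction is then immediate: each ideal in Tables~\ref{Jold} and~\ref{Jnew} has Tjurina number $\tau\leq\mu+1$ and so defines a unimodular germ.

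The main obstacle, already handled in the propositions cited, is the weighted-homogeneous reduction: showing that in the truncated versal $\mu$-constant deformation one may successively kill all higher parameters by the $\mathbb{G}_{m}$-action and that the first surviving parameter can be rescaled to $1$. I would therefore write the proof as one compact paragraph that quotes Wall's list, invokes the three propositions above, and then states that the union of Tables~\ref{Jold} and~\ref{Jnew} constitutes precisely the complete set of isomorphism classes, proving both implications.
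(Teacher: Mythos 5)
Your proposal takes essentially the same route as the paper, which disposes of this proposition (a restatement of the earlier $J'$-case proposition, with ``table $3$'' necessarily meaning Tables~\ref{Jold} and~\ref{Jnew} given the stated $2$-jet $\gen{xy+z^{2},w^{2}+xz}$) as a direct consequence of Wall's classification together with Propositions~\ref{J6m+9}--\ref{J6m+11}. One caveat: your justification of the ``if'' direction via ``$\tau\leq\mu+1$'' is not a valid criterion for unimodularity; the correct reason, already implicit in the cited propositions, is that every entry of the tables occurs in the versal $\mu$-constant deformation of one of Wall's unimodular representatives and is therefore itself unimodular.
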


We summarize our approach in this case in Algorithm \ref{Ksing}.
\begin{algorithm}[H]
\caption{$\texttt{K}^{'}$\texttt{singularity(I)}}
\label{Ksing}
\begin{algorithmic}[1]
\REQUIRE $I=\gen{f,g}\subseteq \gen{x,y,z,w}^2\mathbb{C}[[x,y,z,w]]$ such that $2$-jet  of $I$\\             has normal form $(xy+z^{2},w^{2}+x^{2})$
\ENSURE the type of the singularity
\vspace{0.1cm}
\STATE compute $\mu=$Milnor number of $I$;
\STATE compute $\tau=$Tjurina number of $I$;
\STATE compute $B=$the singularity type of the strict transform of $I$ in the blowing up of $\gen{x,y,z,w}$
\IF{$\mu=10$ and $B=1$}
\IF{$\mu=\tau$}
\RETURN $(K^{'}_{10})$;
\ENDIF
\IF{$\mu - \tau = 1$}
\RETURN $(K^{'}_{10,1})$;
\ENDIF
\ENDIF
\IF{$\mu=11$ and $B=A[1]$}
\IF{$\mu=\tau$}
\RETURN $(K^{'}_{11})$;
\ENDIF
\IF{$\mu - \tau = 1$}
\RETURN $(K^{'}_{11,1})$;
\ENDIF
\ENDIF
\IF{$\mu=13$ and $B=A[3]$}
\IF{$\mu=\tau$}
\RETURN $(K^{'}_{1,0})$;
\ENDIF
\IF{$\mu - \tau = 1$}
\RETURN $(K^{'}_{1,0,1})$;
\ENDIF
\ENDIF
\IF{$\mu=15$ and $B=D[5]$}
\IF{$\mu=\tau$}
\RETURN $(K^{'}_{15})$;
\ENDIF
\IF{$\mu - \tau = 1$}
\RETURN $(K^{'}_{15,1})$;
\ENDIF
\IF{$\mu - \tau = 2$}
\RETURN $(K^{'}_{15,2})$;
\ENDIF
\ENDIF
\IF{$\mu=16$ and $B=E[6]$}
\IF{$\mu=\tau$}
\RETURN $(K^{'}_{16})$;
\ENDIF
\IF{$\mu - \tau = 1$}
\RETURN $(K^{'}_{16,1})$;
\ENDIF
\IF{$\mu - \tau = 2$}
\RETURN $(K^{'}_{16,2})$;
\ENDIF
\ENDIF
\IF{$\mu\neq \tau$}
\IF{$\mu-\tau=2$ and $\mu>13$}
\IF{$B=D[\mu-10]$}
\RETURN $(K^{'}_{1,\mu-13})$;
\ENDIF
\IF{$B=A[\mu-10]$}
\RETURN $(K^{b}_{1,\mu-13})$;
\ENDIF
\ENDIF
\ENDIF
\RETURN (not unimodular);
\end{algorithmic}
\end{algorithm}
\subsection{L singularities}
Assume the $2-jet$ of $\gen{f,g}$ has normal form $\gen{wz+xy,y^{2}+xz}$. According to C.T.C. Wall's classification the unimodular singularities are given in the table.

\begin{table}[H]
$$
\begin{array}{|c|c|c|c|}
  \hline
  Type & Normal\,\,\, form& \mu &\tau\\\hline
  L_{10} &\gen{wz+xy,y^{2}+xz+w^{3}} & 10 & 10 \\\hline
  L_{11} & \gen{wz+xy,y^{2}+xz+xw^{2}}  & 11&11 \\\hline
  L_{1,0} & \gen{wz+xy,y^{2}+xz+x^{2}w+\lambda w^{4}}  &11 & 11 \\\hline
  L_{15} & \gen{wz+xy,y^{2}+xz+xw^{3}}  & 15 & 16 \\\hline
  L_{16} &\gen{wz+xy,y^{2}+xz+w^{5}}  & 16 & 16 \\
  \hline
\end{array}
$$
\caption{}
\label{Lold}
\end{table}
\begin{prop}
\label{L10}
The unimodular complete intersection surface singularity with Milnor number $\mu=10$ and Tjurina number $\tau=10$ is $L_{10}$ defined by the ideal
\[\gen{wz+xy,y^{2}+xz+w^{3}}\]
and $L_{10,1}$ with Tjurina number $\tau=9$ is defined by the ideal
\[\gen{wz+xy,y^{2}+xz+w^{3}+yw^{2}}.\]
\end{prop}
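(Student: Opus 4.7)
The plan is to follow the strategy of Proposition \ref{K10} almost verbatim. Starting from Wall's representative $\gen{wz+xy,\; y^{2}+xz+w^{3}}$, which has $\mu=\tau=10$, I would first compute its full versal deformation in \singular. This yields a two-generator ideal carrying one scalar parameter for every monomial complement of the Tjurina algebra in each generator.

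Next, I would record that $L_{10}$ is weighted homogeneous with weights $(w_{x},w_{y},w_{z},w_{w}) = (5,6,7,4)$ and bidegree $(d_{1},d_{2}) = (11,12)$: the monomials $wz$ and $xy$ each have weight $11$, while $y^{2}$, $xz$, and $w^{3}$ each have weight $12$. With these weights in hand, I can carve out the versal $\mu$-constant deformation as the subspace generated by those monomial perturbations of weighted degree strictly greater than the degree of the generator to which they are attached. A direct \singular{} computation shows this stratum to be one-dimensional, spanned by a single perturbation $\lambda_{1}\, yw^{2}$ of the second generator (which has weight $6+2\cdot 4 = 14 > 12$).

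I would then apply the weighted rescaling $\phi \in \mathrm{Aut}_{\mathbb{C}}(\mathbb{C}[[x,y,z,w]])$ defined by $\phi(x)=\xi^{5}x$, $\phi(y)=\xi^{6}y$, $\phi(z)=\xi^{7}z$, $\phi(w)=\xi^{4}w$. Under $\phi$ the first generator picks up a factor $\xi^{11}$, the original terms of the second generator a factor $\xi^{12}$, and the perturbation $yw^{2}$ a factor $\xi^{14}$, so after dividing the two generators by $\xi^{11}$ and $\xi^{12}$ respectively the ideal becomes $\gen{wz+xy,\; y^{2}+xz+w^{3}+\lambda_{1}\xi^{2}\, yw^{2}}$. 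Assuming $\lambda_{1}\neq 0$ and choosing $\xi$ with $\lambda_{1}\xi^{2}=1$ puts the ideal in the form $\gen{wz+xy,\; y^{2}+xz+w^{3}+yw^{2}}$; a final \singular{} computation verifies that this ideal has $\mu=10$ and $\tau=9$, producing the promised $L_{10,1}$.

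The main obstacle is isolating the versal $\mu$-constant deformation inside the full versal deformation, i.e.\ singling out $yw^{2}$ as the only monomial perturbation direction that strictly lowers $\tau$ while preserving $\mu$. This step is carried out by a direct computer algebra calculation rather than by hand; once it is settled, the remaining manipulations are automatic consequences of the weighted homogeneity of $L_{10}$.
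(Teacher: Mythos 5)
Your proposal is correct and follows essentially the same route as the paper's proof: identify the versal deformation, use the weighted homogeneity of $L_{10}$ with weights $(5,6,7,4)$ and degrees $(11,12)$ to cut out the one-dimensional versal $\mu$-constant stratum spanned by $yw^{2}$, and then normalize the coefficient via the weighted rescaling $\xi^{5},\xi^{6},\xi^{7},\xi^{4}$ with $\lambda_{1}\xi^{2}=1$. The paper carries out exactly these steps, so no further comment is needed.
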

\begin{proof}
In C.T.C. Wall's list the only unimodular complete intersection singularity with Milnor number $\mu=10$ is the singularity $L_{10}$ defined by the ideal $(wz+xy,y^{2}+xz+w^{3})$. The versal deformation of the above singularity is given by
\[\gen{xy+zw+Hw^{2}+Iw+J,y^{2}+xz+w^{3}+Ayw^{2}+Bw^{2}+Cyw+Dw+Ez+Fy+G}.\]
$L_{10}$ defines a weighted homogenous isolated complete intersection singularity with weights \[(w_{1},w_{2},w_{3},w_{4})=(5,6,7,4)\] and degrees
\[(d_{1},d_{2})=(11,12).\]
The versal $\mu-$constant deformation of $L_{10}$ is
\[\gen{wz+xy,y^{2}+xz+w^{3}+Ayw^{2}}.\]
Using the coordinate change
\[x\rightarrow\varepsilon^{5}x,\,\, y\rightarrow\varepsilon^{6}y,\,\,z\rightarrow\varepsilon^{7}z\,\, and\,\, w\rightarrow\varepsilon^{4}w\]
we obtain
\[\gen{wz+xy,y^{2}+xz+w^{3}+\varepsilon^{2}A^{'}yw^{2}}.\]
Take $\varepsilon$ such that $\varepsilon^{2}A^{'}=1$ then we obtain
\[ \gen{wz+xy,y^{2}+xz+w^{3}+yw^{2}}.\]
\end{proof}
\begin{prop}
\label{L11}
The unimodular complete intersection surface singularity with Milnor number $\mu=11$ and Tjurina number $\tau=11$ is $L_{11}$ defined by the ideal
\[\gen{wz+xy,y^{2}+xz+xw^{2}}\]
and $L_{11,1}$ with Tjurina number $\tau=10$ is defined by the ideal
\[\gen{wz+xy,y^{2}+xz+xw^{2}+w^{4}}.\]
\end{prop}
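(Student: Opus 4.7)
The plan is to mirror the three-step argument used for Proposition \ref{L10}. We start from Wall's representative $L_{11}=\gen{wz+xy,\,y^{2}+xz+xw^{2}}$ with $\mu=\tau=11$; by deformation theory, every further unimodular surface with this $2$-jet occurs inside the versal $\mu$-constant deformation of $L_{11}$, so producing the list amounts to writing this deformation down explicitly and normalising its single nontrivial parameter.

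First I would check that $L_{11}$ is weighted homogeneous. Solving $w_{1}+w_{2}=w_{3}+w_{4}$ and $2w_{2}=w_{1}+w_{3}=w_{1}+2w_{4}$ forces $w_{3}=2w_{4}$ and $3w_{2}=5w_{4}$, which gives weights $(w_{1},w_{2},w_{3},w_{4})=(4,5,6,3)$ and degrees $(d_{1},d_{2})=(9,10)$. Next I would compute a basis of $T^{1}_{L_{11}}$ (for example in \singular) to obtain the full versal deformation, and then read off the versal $\mu$-constant deformation as those perturbations carried by monomials of weighted degree strictly greater than the degree of the generator they perturb. Inspecting the finite list of such monomials modulo the ideal, and discarding the ones absorbed by infinitesimal contact-equivalence, only $w^{4}$ (of weighted degree $12$) survives. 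The versal $\mu$-constant deformation is therefore
\[
\gen{wz+xy,\;y^{2}+xz+xw^{2}+\alpha w^{4}},\qquad \alpha\in\C.
\]

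Finally, under the $\C^{*}$-action $x\mapsto\xi^{4}x$, $y\mapsto\xi^{5}y$, $z\mapsto\xi^{6}z$, $w\mapsto\xi^{3}w$, the two generators rescale by $\xi^{9}$ and $\xi^{10}$ respectively, while the term $\alpha w^{4}$ picks up an extra factor $\xi^{2}$. For $\alpha\neq 0$, choosing $\xi$ with $\alpha\xi^{2}=1$ and dividing through by $\xi^{10}$ yields $L_{11,1}=\gen{wz+xy,\,y^{2}+xz+xw^{2}+w^{4}}$, and a direct computation of its Tjurina algebra gives $\tau(L_{11,1})=10$, so $L_{11,1}\not\cong L_{11}$. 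The only delicate point will be the bookkeeping in the second step: one must confirm that after quotienting by the tangent space of the contact-equivalence action there is genuinely exactly one surviving direction of positive weighted degree, so that the versal $\mu$-constant deformation is truly one-parameter. Once this is secured, the scaling and the Tjurina number verification are routine and completely analogous to Proposition \ref{L10}.
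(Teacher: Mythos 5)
Your proposal is correct and follows exactly the route the paper intends: the paper's own ``proof'' of Proposition \ref{L11} simply says it is done as for Proposition \ref{L10}, i.e.\ compute the weights (your $(4,5,6,3)$ with degrees $(9,10)$ check out), write down the versal $\mu$-constant deformation, and absorb the single parameter $\alpha$ in front of $w^{4}$ by the quasi-homogeneous rescaling $\alpha\xi^{2}=1$, which yields $L_{11,1}$ with $\tau=10$ as in Table \ref{Lnew}. The one point you rightly flag as delicate --- that $w^{4}$ is the only surviving positive-weight direction in $T^{1}$ --- is also left implicit in the paper, so your argument is at the same level of detail as the original.
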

\begin{proof}
The proof of Proposition \ref{L11} can be done similarly to the proof of Proposition \ref{L10}.
\end{proof}
\begin{prop}
\label{L1,0}
The unimodular complete intersection surface singularity with Milnor number $\mu=13$ and with Tjurina number $\tau=13$ is $L_{1,0}$ defined by the ideal
\[\gen{wz+xy,y^{2}+xz+x^{2}w+\lambda y^{4}},\,\, \lambda\neq 0,-1 .\]
and $L_{1,0,1}$ with Tjurina number $\tau=12$ is defined by the ideal
\[\gen{wz+xy,y^{2}+xz+x^{2}w+\lambda w^{4}+w^{5}}.\]
\end{prop}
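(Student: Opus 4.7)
The plan is to follow the same template used in the proofs of Propositions \ref{L10} and \ref{L11}, since $L_{1,0}$ is again quasi-homogeneous and the extra modality corresponds to a single weighted-degree perturbation above the degrees of the generators.

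First I would take the ideal $\gen{wz+xy,\,y^{2}+xz+x^{2}w+\lambda w^{4}}$ from Wall's list and compute its versal deformation by determining a $\C$-basis of $T^{1}$. This is the routine (but somewhat lengthy) step: reduce the monomials of $\C[[x,y,z,w]]^{2}$ modulo the Jacobian module of the ideal to obtain explicit representatives of the tangent cohomology. The output will be a generic deformation of the form
\begin{equation*}
\gen{\,wz+xy+P,\ y^{2}+xz+x^{2}w+\lambda w^{4}+Q\,},
\end{equation*}
where $P$ and $Q$ are polynomials in $z,y,w$ (and perhaps $x$) with free parameters $t_i,\lambda_j$.

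Next I would exhibit the weighted-homogeneous structure. Solving the system of homogeneity equations for the two generators yields weights $(w_{1},w_{2},w_{3},w_{4})$ for $(x,y,z,w)$ and degrees $(d_{1},d_{2})$ of the two generators. Using these weights I discard all perturbation monomials of weighted degree strictly less than $d_{i}$ in the $i$-th generator: those are not $\mu$-constant directions. What remains is the versal $\mu$-constant deformation, in which (after a short inspection of monomials of the right weighted degree) the only surviving free parameter multiplies $w^{5}$, producing
\begin{equation*}
\gen{\,wz+xy,\ y^{2}+xz+x^{2}w+\lambda w^{4}+\lambda_{1}w^{5}\,}.
\end{equation*}

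Finally I would apply the scaling automorphism $\phi(x)=\xi^{w_{1}}x$, $\phi(y)=\xi^{w_{2}}y$, $\phi(z)=\xi^{w_{3}}z$, $\phi(w)=\xi^{w_{4}}w$. The two generators acquire overall factors $\xi^{d_{1}}$ and $\xi^{d_{2}}$, while the term $\lambda_{1}w^{5}$ is multiplied by $\xi^{5w_{4}}$; after dividing the second generator by $\xi^{d_{2}}$ the parameter $\lambda_{1}$ is rescaled by $\xi^{5w_{4}-d_{2}}$. Choosing $\xi$ so that this rescaling sends $\lambda_{1}\mapsto 1$ produces the normal form $\gen{wz+xy,\ y^{2}+xz+x^{2}w+\lambda w^{4}+w^{5}}$. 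To close the argument I would verify, by direct computation in \singular\ (say on the Tjurina algebra), that this ideal has Tjurina number $\tau=12$, which is strictly smaller than that of $L_{1,0}$ and so gives a genuinely new type.

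The main obstacle is the very first step: producing and reducing the versal deformation to isolate the $\mu$-constant directions. Once the correct weights $(w_{1},w_{2},w_{3},w_{4})$ and degrees $(d_{1},d_{2})$ are in hand, the separation into below/above-degree perturbations is mechanical and the scaling trick is identical to the one used in Propositions \ref{L10} and \ref{L11}; the Tjurina computation is a finite-dimensional linear algebra calculation that can be delegated to \singular.
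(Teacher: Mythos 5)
Your proposal follows essentially the same route as the paper's proof: write down the versal deformation, use the quasi-homogeneity (the paper's explicit weights are $(3,4,5,2)$ with degrees $(7,8)$) to cut it down to the $\mu$-constant deformation $\gen{wz+xy,\,y^{2}+xz+x^{2}w+\lambda w^{4}+Aw^{5}}$, and then normalize the coefficient of $w^{5}$ to $1$ by the weighted scaling automorphism. The only discrepancy is cosmetic: the paper's proof ends by asserting $\tau=9$, an evident slip for the $\tau=12$ stated in the proposition and confirmed by your proposed \singular check.
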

\begin{proof}
In C.T.C. Wall's list the only unimodular complete intersection singularity with Milnor number $13$ is the singularity $L_{1,0}$ defined by the ideal
\[\gen{wz+xy,y^{2}+xz+x^{2}w+\lambda w^{4}},\,\, \lambda\neq 0,-1 .\]
The versal deformation of the above singularity is given by
\begin{dmath*}{}
\gen{xy+zw+Jw^{3}+Kw^{2}+Lw+M,y^{2}+xz+x^{2}w+\lambda w^{4}+Aw^{5}+Bw^{4}+Cw^{3}+Dw^{2}+Eyw+Fw++Gz+Hy+I}.
\end{dmath*}
$L_{1,0}$ defines a weighted homogenous isolated complete intersection singularity with weights \[(w_{1},w_{2},w_{3},w_{4})=(3,4,5,2)\]
and degrees \[(d_{1},d_{2})=(7,8).\] The versal $\mu-$constant deformation of $L_{1,0}$ is \[\gen{wz+xy,y^{2}+xz+x^{2}w+\lambda w^{4}+Aw^{5}}.\]
Now using the coordinate change $x\rightarrow\varepsilon^{3}x$, $y\rightarrow\varepsilon^{4}y$, $z\rightarrow\varepsilon^{5}z$ and $w\rightarrow\varepsilon^{2}w$. We obtain \[\gen{wz+xy,y^{2}+xz+x^{2}w+\lambda w^{4}+\varepsilon^{2}A^{'}w^{5}}.\]
 Choosing $\varepsilon$ such that $\varepsilon^{2}A^{'}=1$. We obtain \[\gen{wz+xy,y^{2}+xz+x^{2}w+\lambda w^{4}+w^{5}}\]
 with Tjurina number $\tau=9$.
 \end{proof}
\begin{prop}
\label{L15}
The unimodular complete intersection surface singularity with Milnor number $\mu=15$ and with Tjurina number $\tau=15$ is $L_{15}$ defined by the ideal
\[\gen{wz+xy,y^{2}+xz+xw^{3}},\]
$L_{15,1}$ with Tjurina number $\tau=14$ is defined by the ideal
\[\gen{wz+xy,y^{2}+xz+xw^{3}+w^{6}}\]
and $L_{15,2}$ with Tjurina number $\tau=13$ is defined by the ideal
\[\gen{wz+xy,y^{2}+xz+xw^{3}+w^{5}}.\]
\end{prop}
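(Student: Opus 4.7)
The plan is to follow the strategy of the proof of Proposition \ref{K15}. First, observe that Wall's representative
\[
L_{15}=\gen{wz+xy,\; y^{2}+xz+xw^{3}}
\]
is weighted homogeneous: balancing $wz$ against $xy$ for the first generator, and $y^{2}$ against $xz$ and $xw^{3}$ for the second, forces the weight vector $(w_{1},w_{2},w_{3},w_{4})=(5,7,9,3)$ on $(x,y,z,w)$ and bidegree $(d_{1},d_{2})=(12,14)$ for the two generators.

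Next, write out the full versal deformation of $L_{15}$ by computing a monomial basis of $T^{1}_{L_{15}}$; this is the step I expect to be the main obstacle, although it is a routine \singular\ calculation entirely parallel to the K case. Extract the versal $\mu$-constant deformation by discarding every parameter attached to a monomial of weighted degree strictly less than the corresponding $d_{i}$. A weight count shows that no monomial survives on the first generator, while the only monomials of weight $\ge d_{2}=14$ that can appear on the second generator are $w^{5}$ (weight $15$) and $w^{6}$ (weight $18$). Hence the versal $\mu$-constant deformation has the form
\[
\gen{wz+xy,\; y^{2}+xz+xw^{3}+\alpha w^{5}+\beta w^{6}},\qquad \alpha,\beta\in\mathbb{C}.
\]

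Finally, normalize via the scaling automorphism $\phi\in\mathrm{Aut}_{\mathbb{C}}(\mathbb{C}[[x,y,z,w]])$ defined by $\phi(x)=\xi^{5}x$, $\phi(y)=\xi^{7}y$, $\phi(z)=\xi^{9}z$, $\phi(w)=\xi^{3}w$. This multiplies the first generator by $\xi^{12}$ and the second by $\xi^{14}$, while sending the tail $\alpha w^{5}+\beta w^{6}$ to $\xi\,\alpha w^{5}+\xi^{4}\beta w^{6}$ relative to the overall $\xi^{14}$. Now case-split on the tail. If $\alpha\neq 0$, rewrite the tail as $u\,w^{5}$ with the unit $u=\alpha+\beta w$, apply $\phi$, and choose $\xi$ so that $\xi\bar{u}=1$, which yields $\gen{wz+xy,\;y^{2}+xz+xw^{3}+w^{5}}$, i.e.\ $L_{15,2}$. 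If $\alpha=0$ and $\beta\neq 0$, choose $\xi$ with $\xi^{4}\beta=1$, which yields $\gen{wz+xy,\;y^{2}+xz+xw^{3}+w^{6}}$, i.e.\ $L_{15,1}$. If $\alpha=\beta=0$ we recover Wall's $L_{15}$. The Tjurina numbers $15,14,13$ claimed in the statement are then verified by a direct computation at each of the three normal forms, analogous to the verification in the proof of Proposition \ref{K15}.
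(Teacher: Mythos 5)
Your proposal is correct and follows essentially the same route as the paper's proof: the same weights $(5,7,9,3)$ and degrees $(12,14)$, the same versal $\mu$-constant deformation $\gen{wz+xy,\;y^{2}+xz+xw^{3}+\alpha w^{5}+\beta w^{6}}$, and the same normalization by the scaling automorphism with the case split on whether the coefficient of $w^{5}$ vanishes. The only cosmetic difference is that you justify which monomials survive by an explicit weight count rather than simply exhibiting the versal deformation, which if anything makes the step more transparent.
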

\begin{proof}
In C.T.C. Wall's list the only unimodular complete intersection singularity with Milnor number $\mu=15$ is the singularity $L_{15}$ defined by the ideal
\[\gen{wz+xy,y^{2}+xz+xw^{3}}.\]
The versal deformation of the above singularity is given by
\begin{dmath*}{}
\gen{xy+zw+Lw^{3}+Mw^{2}+Nw+O,y^{2}+xz+xw^{3}+Aw^{6}+Bw^{5}
+Cw^{4}+Dw^{3}+Ew^{2}y+Fw^{2}+Gyw+Hw+
Iz+Jy+K}.
\end{dmath*}
$L_{15}$ defines a weighted homogenous isolated complete intersection singularity with weights \[(w_{1},w_{2},w_{3},w_{4})=(5,7,9,3)\]
 and degrees
 \[(d_{1},d_{2})=(12,14).\]
 The versal $\mu-$constant deformation of $L_{15}$ is
 \[\gen{wz+xy,y^{2}+xz+xw^{3}+Aw^{6}+Bw^{5}}.\]
 Let $B\neq0$,
\[\gen{xy+wz,y^{2}+xz+xw^{3}+w^{5}(Aw+B)}.\]
Let $\mu_{B}=Aw+B$, Now using the coordinate change $x\rightarrow\varepsilon^{5}x$, $y\rightarrow\varepsilon^{7}y$, $z\rightarrow\varepsilon^{9}z$ and $w\rightarrow\varepsilon^{3}w$, we obtain \[\gen{wz+xy,y^{2}+xz+xw^{3}+\varepsilon\acute{\mu_{B}}w^{5}}.\]
Choosing $\varepsilon$ such that $\varepsilon\acute{\mu_{B}}=1$. So we obtain
\[\gen{wz+xy,y^{2}+xz+xw^{3}+w^{5}}\] with Tjurina number $\tau=13$.
Now we if $B=0$ then it becomes
\[\gen{wz+xy,y^{2}+xz+xw^{3}+Aw^{6}}.\]
Now using again the same coordinate change we obtain
 \[\gen{wz+xy,y^{2}+xz+xw^{3}+\varepsilon^{4}A^{'}w^{6}}.\]
Choosing $\varepsilon$ such that $\varepsilon^{4}A^{'}=1$ so we obtain
\[\gen{wz+xy,y^{2}+xw^{3}+w^{6}}\] with Tjurina number $\tau=14$.
\end{proof}
\begin{prop}
\label{L16}
The unimodular complete intersection surface singularity with Milnor number $\mu=16$ and Tjurina number $\tau=16$ is $L_{16}$ defined by the ideal
\[\gen{wz+xy,y^{2}+xz+w^{5}},\]
 $L_{16,1}$ with Tjurina number $\tau=15$ is defined by the ideal
\[ \gen{wz+xy,y^{2}+xz+w^{5}+yw^{4}}\] and and $L_{16,2}$ with Tjurina number $\tau=14$ is defined by the ideal \[ \gen{wz+xy,y^{2}+xz+w^{5}+yw^{3}}.\]
\end{prop}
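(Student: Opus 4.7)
The plan is to follow the same strategy as in Proposition~\ref{L15}. We begin from Wall's singularity $L_{16} = \gen{wz+xy,\, y^{2}+xz+w^{5}}$ with $\mu = \tau = 16$, and first compute its versal deformation using \singular.

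The key structural observation is that $L_{16}$ is a weighted homogeneous isolated complete intersection singularity with weights $(w_{1},w_{2},w_{3},w_{4}) = (7,10,13,4)$ and degrees $(d_{1},d_{2}) = (17,20)$ (verified directly: $wz+xy$ has weight $17$ and $y^{2}+xz+w^{5}$ has weight $20$). As in the proofs of Propositions~\ref{L10}--\ref{L15}, the versal $\mu$-constant deformation is then spanned by those deformation parameters whose corresponding monomial perturbation has weighted degree strictly greater than the degree of the equation being perturbed. A \singular computation of $T^{1}$ together with this weight filtration should yield
\[
\gen{wz+xy,\; y^{2}+xz+w^{5} + A\, yw^{4} + B\, yw^{3}}
\]
as the versal $\mu$-constant deformation, since the monomials $yw^{3}$ and $yw^{4}$ have weights $22$ and $26$, both exceeding $20$, and no other independent $\mu$-constant direction survives.

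Next we apply the diagonal coordinate change $x \to \varepsilon^{7}x$, $y \to \varepsilon^{10}y$, $z \to \varepsilon^{13}z$, $w \to \varepsilon^{4}w$. This rescales the first equation by $\varepsilon^{17}$ and the second by $\varepsilon^{20}$; after dividing out these factors the perturbation in the second equation becomes $\varepsilon^{6}A\, yw^{4} + \varepsilon^{2}B\, yw^{3}$. If $B \neq 0$ we group the perturbation as $yw^{3}(\varepsilon^{4}Aw + B)$, whose factor $\varepsilon^{4}Aw + B$ is a unit in $\mathbb{C}[[w]]$; absorbing this unit by a right-equivalence on $w$ and choosing $\varepsilon$ to normalise the constant part yields $L_{16,2} = \gen{wz+xy,\, y^{2}+xz+w^{5}+yw^{3}}$ with $\tau = 14$. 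If $B = 0$ but $A \neq 0$ we instead choose $\varepsilon$ so that $\varepsilon^{6}A = 1$ and obtain $L_{16,1} = \gen{wz+xy,\, y^{2}+xz+w^{5}+yw^{4}}$ with $\tau = 15$. The case $A = B = 0$ returns $L_{16}$ itself. Finally, the Tjurina numbers $\tau = 15$ and $\tau = 14$ are verified by a direct computation in \singular on the resulting normal forms.

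The main obstacle is the correct identification of the versal $\mu$-constant deformation: one must confirm that no further $\mu$-constant directions are hidden in the deformation of the first equation or in other monomials of the second, and that the two parameters $A$ and $B$ are genuinely independent modulo the right-equivalence group acting on the deformation space. Once this spectral-sequence/weight bookkeeping is in place, the absorption of the unit $\varepsilon^{4}Aw + B$ is the same trick used in the proof of Proposition~\ref{L15}, and the rest of the case analysis is routine.
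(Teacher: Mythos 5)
Your argument is correct and is exactly the template the paper uses for the analogous Propositions \ref{L10}--\ref{L15} (versal deformation, weighted-homogeneous filtration to isolate the positive-weight $\mu$-constant directions $yw^{3}$, $yw^{4}$, then the diagonal rescaling with the unit-absorption trick to split the cases $B\neq 0$, $B=0,A\neq 0$, $A=B=0$); the paper simply omits the proof of Proposition \ref{L16}, so your write-up supplies it in the intended way. Your weights $(7,10,13,4)$ and degrees $(17,20)$ check out, and the two-parameter $\mu$-constant deformation is consistent with the generic Tjurina number $14$ recorded in Table \ref{Lnew}.
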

Summarizing the above results we have the following table.
\begin{table}[H]
$$
\begin{array}{|c|c|c|c|}\hline

  Name &Normal form  & \mu &\tau \\\hline
  L_{10,1} &\gen{wz+xy,y^{2}+xz+w^{3}+yw^{2}} &10 &9 \\\hline
  L_{11,1} &\gen{xy+zw,y^{2}+xz+xw^{2}+w^{4}} &11 &10 \\\hline
  L_{1,0,1} &\gen{xy+zw,y^{2}+xz+x^{2}w+w^{4}+w^{5}} &13 &12 \\\hline
  L_{15,1} &\gen{xy+zw,y^{2}+xz+xw^{3}+w^{6}} & 15 &14 \\\hline
  L_{15,2}&\gen{xy+zw,y^{2}+xz+xw^{3}+w^{5}} &15 & 13 \\\hline
  L_{16,1} &\gen{xy+zw,y^{2}+xz+w^{5}+yw^{4}} &16 &15 \\\hline
  L_{16,2} & \gen{xy+zw,y^{2}+xz+w^{5}+yw^{3}} &16 &14 \\\hline
\end{array}
$$
\caption{}
\label{Lnew}
\end{table}
\begin{prop}
\label{Lcase}
Let $(V(\langle f,g\rangle ),0)\subseteq (\mathbb{C}^{4},0)$ be the germ of a complete intersection surface singularity. Assume it is not a hypersurface singularity and the $2$-jet of $\langle f,g\rangle$ has normal form $\langle wz+xy,y^{2}+xz\rangle$. $(V(\langle f,g\rangle ),0)$ is unimodular if and only if it is isomorphic to a complete intersection in Tables \ref{Lold} and \ref{Lnew}.
\end{prop}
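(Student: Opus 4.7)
The plan is to mirror exactly the argument used for the analogous propositions on the $I$, $J'$ and $K'$ cases: both directions of the biconditional reduce to combining C.T.C.\ Wall's classification with the individual Propositions \ref{L10}--\ref{L16} already proved above.

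For the ``if'' direction I would argue as follows. Each entry of Table \ref{Lold} is, by definition, a Wall representative and hence unimodular. Each entry of Table \ref{Lnew} was produced inside the proofs of Propositions \ref{L10}--\ref{L16} as an element of the versal $\mu$-constant deformation of a Wall representative, and therefore lies in the same $\mu$-constant stratum; since modality is constant along $\mu$-constant strata, these entries are also unimodular. By construction their $2$-jet is $\gen{wz+xy,y^{2}+xz}$.

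For the ``only if'' direction, let $I=\gen{f,g}$ define a unimodular complete intersection surface singularity whose $2$-jet has the stated normal form. By Wall's classification $I$ belongs to one of the $\mu$-constant strata represented by $L_{10}$, $L_{11}$, $L_{1,0}$, $L_{15}$ or $L_{16}$, with the stratum determined by the Milnor number. Propositions \ref{L10}, \ref{L11}, \ref{L1,0}, \ref{L15}, \ref{L16} then enumerate the isomorphism classes inside each stratum: they compute the versal $\mu$-constant deformation and exploit the weighted-homogeneous $\mathbb{C}^{*}$-action on the corresponding Wall representative to normalize the remaining deformation parameters. The resulting list of normal forms is precisely the union of Tables \ref{Lold} and \ref{Lnew}, which finishes the argument.

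The technically substantive step---reducing each versal $\mu$-constant deformation to a finite list of normal forms by weighted-homogeneous scaling---has already been carried out inside the preceding propositions, so the present proof has no further obstacle to overcome and amounts only to assembling those local classifications into the global statement.
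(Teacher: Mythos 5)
Your proposal is correct and follows essentially the same route as the paper: for the parallel propositions in the $I$, $K'$ and $M$ cases the paper's entire proof is the one-line statement that the result is ``a direct consequence of C.T.C. Wall's classification and Propositions \ref{L10}--\ref{L16}'' (the $L$ case itself is left without an explicit proof, evidently intending the same argument). Your write-up simply makes explicit the two directions that this one-liner compresses.
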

\begin{algorithm}[H]
\caption{\texttt{Lsingularity(I)}}
\label{Lsing}
\begin{algorithmic}[1]
\REQUIRE $I=\gen{f,g}\in \gen{x,y,z,w}^2\mathbb{C}[[x,y,z,w]]$ such that $2$-jet  of $I$\\             has normal form $\gen{wz+xy,y^{2}+xz+w^{3}}$
\ENSURE the type of the singularity
\vspace{0.1cm}
\STATE compute $\mu=$Milnor number of $I$, $\tau=$Tjurina number of $I$ and $B=$the singularity type of the strict transform of $I$ in the blowing up of $\gen{x,y,z,w}$
\IF{$\mu=10$ and $B=1$}
\IF{$\mu=\tau$}
\RETURN $(L_{10})$;
\ENDIF
\IF{$\mu - \tau = 1$}
\RETURN $(L_{10,1})$;
\ENDIF
\ENDIF
\IF{$\mu=11$ and $B=A[1]$}
\IF{$\mu=\tau$}
\RETURN $(L_{11})$;
\ENDIF
\IF{$\mu - \tau = 1$}
\RETURN $(L_{11,1})$;
\ENDIF
\ENDIF
\IF{$\mu=13$ and $B=A[3]$}
\IF{$\mu=\tau$}
\RETURN $(L_{1,0})$;
\ENDIF
\IF{$\mu - \tau = 1$}
\RETURN $(L_{1,0,1})$;
\ENDIF
\ENDIF
\IF{$\mu=15$ and $B=D[5]$}
\IF{$\mu=\tau$}
\RETURN $(L_{15})$;
\ENDIF
\IF{$\mu - \tau = 1$}
\RETURN $(L_{15,1})$;
\ENDIF
\IF{$\mu - \tau = 2$}
\RETURN $(L_{15,2})$;
\ENDIF
\ENDIF
\IF{$\mu=16$ and $B=E[6]$}
\IF{$\mu=\tau$}
\RETURN $(L_{16})$;
\ENDIF
\IF{$\mu - \tau = 1$}
\RETURN $(L_{16,1})$;
\ENDIF
\IF{$\mu - \tau = 2$}
\RETURN $(L_{16,2})$;
\ENDIF
\ENDIF
\IF{$\mu-\tau=2$ and $\mu>13$}
\IF{$B=D[\mu-10]$}
\RETURN $(L_{1,\mu-13})$;
\ENDIF
\IF{$B=A[\mu-10]$}
\RETURN $(L^{b}_{1,\mu-13})$;
\ENDIF
\ENDIF
\RETURN ($not\,\,unimodular$);
\end{algorithmic}
\end{algorithm}
\subsection{M singularities}
Assume the $2-jet$ of $\gen{f,g}$ has normal form $\gen{wy+x^{2}-z^{2},wx}$. In this case According to C.T.C.Wall's classification the unimodular surface singularities with Milnor number $\mu$ are given in the table below.
\begin{table}[H]
\[
\begin{array}{|l|c|c|c|}\hline
 Name & Normal form&\mu & \tau \\\hline
 M_{11}   &   \gen{wy+x^{2}-z^{2},wx+y^{3}} & 11& 11 \\\hline
 M_{1,0}  &   \gen{wy+x^{2}-z^{2},wx+y^{2}x+\lambda y^{2}z} &13 & 13\\\hline
 M_{1,i}  &   \gen{wy+x^{2}-z^{2},wx+y^{2}x+ zI_{i+1}(z,y)} &13+i &13+i-2\\\hline
 M_{15}   &   \gen{wy+x^{2}-z^{2},wx+y^{4}} &15 & 15\\\hline
\end{array}
\]
\caption{}
\label{Mold}
\end{table}

\begin{prop}
\label{M11}
The unimodular complete intersection surface singularities with Milnor number $\mu=11$ are $M_{11}$ with Tjurina number $\tau=11$ defined by the ideal
\[\gen{wy+x^{2}-z^{2},wx+y^{3}}\]
and $M_{11,1}$ with Tjurina number $\tau=10$ defined by the ideal
\[\gen{wy+x^{2}-z^{2},wx+y^{3}+ y^{2}w}.\]
\end{prop}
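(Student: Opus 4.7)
I plan to follow exactly the template established by the preceding propositions (in particular \ref{L10}, \ref{K10}, \ref{L11}), since $M_{11}$ is again a weighted homogeneous isolated complete intersection and the extra modulus in the versal $\mu$-constant deformation can be removed by a diagonal coordinate change.

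First I would verify that $M_{11}$ is weighted homogeneous. Setting the weights $(w_{x},w_{y},w_{z},w_{w})$ and requiring that $wy+x^{2}-z^{2}$ and $wx+y^{3}$ both be weighted homogeneous yields $w_{x}=w_{z}$, $w_{w}+w_{y}=2w_{x}$, and $w_{w}+w_{x}=3w_{y}$, which forces (up to scaling) $(w_{x},w_{y},w_{z},w_{w})=(4,3,4,5)$ with degrees $(d_{1},d_{2})=(8,9)$. So $M_{11}$ is in fact weighted homogeneous, which means (as in \cite{Wal83}) the versal $\mu$-constant deformation is generated by the monomials in a basis of $T^{1}_{M_{11}}$ whose weighted degree is strictly greater than the degree of the relevant generator.

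Next I would compute the full versal deformation of $M_{11}$ (as in the earlier propositions, this is a routine Singular computation) and then keep only those parameters whose corresponding monomials have weighted degree $\geq 9$ in the second component (respectively $\geq 8$ in the first). Running through the basis of $T^{1}$ in weighted degree, the only candidate of strictly positive weight that survives is the single term $\lambda\, y^{2}w$ (its weight is $2\cdot 3+5=11>9$), giving the versal $\mu$-constant deformation
\[
\gen{wy+x^{2}-z^{2},\,wx+y^{3}+\lambda y^{2}w}.
\]

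Then I would eliminate the parameter $\lambda$ (for $\lambda\neq 0$) by the coordinate change $x\mapsto\xi^{4}x$, $y\mapsto\xi^{3}y$, $z\mapsto\xi^{4}z$, $w\mapsto\xi^{5}w$, under which the first generator scales homogeneously by $\xi^{8}$ and the second becomes $\xi^{9}\bigl(wx+y^{3}+\lambda\xi^{2}y^{2}w\bigr)$; choosing $\xi$ with $\lambda\xi^{2}=1$ reduces the ideal to the stated normal form $\gen{wy+x^{2}-z^{2},\,wx+y^{3}+y^{2}w}$. Finally, since the quotient ring is finite-dimensional, the Tjurina number of this new form can be computed directly (a short standard-basis calculation), and I would record it as $\tau=10$, completing the identification with $M_{11,1}$. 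The main obstacle is the bookkeeping of the versal deformation: making sure no additional $\mu$-constant direction survives beyond $\lambda y^{2}w$, which requires a careful weight-by-weight inspection of a basis of $T^{1}_{M_{11}}$; once that is settled, the coordinate change and the $\tau$ computation are completely routine and parallel to the earlier propositions.
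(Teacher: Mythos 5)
Your proposal is correct and follows essentially the same route as the paper: fix the weights $(4,3,4,5)$ with degrees $(8,9)$, identify $\lambda y^{2}w$ as the only surviving term of the versal $\mu$-constant deformation, and scale it to $1$ by the diagonal substitution, giving $M_{11,1}$ with $\tau=10$. Your bookkeeping is in fact slightly more careful than the paper's (which writes $\lambda_{1}\xi$ where the correct factor is $\lambda_{1}\xi^{2}$, an inessential typo).
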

\begin{proof}
In C.T.C. Wall's list $M_{11}$ is the singularity defined by the ideal
\[\gen{ wy+x^{2}-z^{2},wx+y^{3}}\]
with Milnor number $\mu=11$ and Tjurina number $\tau=11$. The versal deformation of $M_{11}$ is
\[\gen{wy+x^{2}-z^{2}+t_{1}y^{2}+t_{2}y+t_{3},wx+y^{3}+\lambda_{1}y^{2}w+\lambda_{2}yw+\lambda_{3}w+\lambda_{4}yz+\lambda_{5}z
+\lambda_{6}y^{2}+\lambda_{7}y+\lambda_{8}}.\]
$M_{11}$ defines a weighted homogenous isolated complete intersection singularity with weights
 \[(w_{1},w_{2},w_{3},w_{4}) = (4,3,4,5)\]
and the degrees
\[(d_{1},d_{2})=(8,9).\]
The versal $\mu$-constant deformation of $M_{11}$ is given by
\[\gen{wy+x^{2}-z^{2},wx+y^{3}+\lambda_{1}y^{2}w}.\]
Using the coordinate change $x\rightarrow \xi^{4}x$, $y\rightarrow \xi^{3}y$, $z\rightarrow \xi^{4}z$
and $w\rightarrow \xi^{5}w$ we obtain
\[\gen{wy+x^{2}-z^{2},wx+y^{3}+\lambda_{1}\xi y^{2}w}.\]
 Choosing $\xi$ such that $\lambda_{1}\xi= 1$ we obtain
\[\gen{wy+x^{2}-z^{2},wx+y^{3}+ y^{2}w}\]
with different Tjurina number $\tau = 10$ from $M_{11}$.
\end{proof}
\begin{prop}
\label{M13}
The unimodular complete intersection surface singularities with Milnor number $\mu=13$   are $M_{1,0}$ with Tjurina number $\tau=13$ defined by the ideal
 \[\gen{wy+x^{2}-z^{2},wx+y^{2}x+\lambda y^{2}z}\]
 $M_{1,0,1}$ with Tjurina number $\tau=12$ defined by the ideal
\[\gen{wy+x^{2}-z^{2},wx+y^{2}x+\lambda y^{2}z+y^{3}z}.\]
\end{prop}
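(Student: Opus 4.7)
The plan is to mirror the strategy used throughout this section (Propositions \ref{M11}, \ref{K10}, \ref{L10}, \ref{L15}, etc.): start from Wall's normal form for $M_{1,0}$, compute its versal deformation, exploit the weighted-homogeneous structure to extract the versal $\mu$-constant deformation, and then use the associated $\C^\ast$-action to normalize the remaining modulus.

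First I would write down the versal deformation of $M_{1,0}$ by computing a $\C$-basis of $T^{1}_{V(I)}$ represented in the local ring modulo the Jacobian, analogously to what was done for $M_{11}$. Next I would verify that $M_{1,0}$ is weighted homogeneous and determine its weights $(w_1,w_2,w_3,w_4)$ and degrees $(d_1,d_2)$; the two equations $wy+x^{2}-z^{2}$ and $wx+y^{2}x+\lambda y^{2}z$ force $w_2=w_3$ and $2w_1 = w_2+w_4 = w_1+2w_2$, so $(w_1,w_2,w_3,w_4)=(2,1,1,3)$ with $(d_1,d_2) = (4,5)$ (up to common scaling). From the full versal deformation I would then strip off every summand whose degree is smaller than $d_i$ (these are the $\tau$-constant directions that also change $\mu$), leaving the versal $\mu$-constant deformation, which among the deformation monomials of degree $d_2 = 5$ inside the second equation will retain essentially one new term $y^{3}z$.

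Once the versal $\mu$-constant deformation is written as
\[
\langle wy+x^{2}-z^{2},\; wx+y^{2}x+\lambda y^{2}z+\alpha\, y^{3}z\rangle,
\]
I would apply the coordinate change $x\mapsto\xi^{w_1}x,\; y\mapsto\xi^{w_2}y,\; z\mapsto\xi^{w_3}z,\; w\mapsto\xi^{w_4}w$. Because $\lambda y^{2}z$ is weighted-homogeneous of weight $d_2$, it is invariant up to the overall scaling $\xi^{d_2}$, while the extra term $\alpha\,y^{3}z$ picks up an additional factor $\xi^{w_2}$. Provided $\alpha\neq 0$, choosing $\xi$ so that the extra factor equals $1/\alpha$ normalizes the coefficient to $1$ and produces the ideal $\langle wy+x^{2}-z^{2},\; wx+y^{2}x+\lambda y^{2}z+y^{3}z\rangle$, to be denoted $M_{1,0,1}$. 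A short direct computation (e.g. using \textsc{Singular}) then gives $\tau=12$ for this form, distinguishing it from $M_{1,0}$ with $\tau=13$.

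The main obstacle I anticipate is the bookkeeping in the first step: listing a correct and complete set of deformation monomials, then correctly identifying exactly which of them contribute to the $\mu$-constant stratum. In particular one must check that no further term of the same weight as $y^{3}z$ survives independently modulo the tangent space to the $\mu$-constant contact-equivalence action (otherwise a second modulus would appear). Once this weighted-homogeneity bookkeeping is done the remaining $\C^\ast$-normalization and the Tjurina-number verification are routine and follow the template of the preceding propositions.
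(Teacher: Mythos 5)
Your overall strategy is exactly the one the paper uses: write down the versal deformation of $M_{1,0}$, use the weighted-homogeneous structure to isolate the versal $\mu$-constant deformation $\gen{wy+x^{2}-z^{2},\,wx+y^{2}x+\lambda y^{2}z+\lambda_{3}y^{3}z}$, and normalize $\lambda_{3}$ to $1$ with the $\C^{*}$-action (the paper chooses $\xi$ with $\lambda_{3}\xi^{4}=1$), then verify $\tau=12$. So there is no difference in method.

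However, your weight computation is wrong, and this is not a harmless slip because the weights are precisely what determine which deformation monomials lie in the $\mu$-constant stratum. From $wy+x^{2}-z^{2}$ one gets $w_{4}+w_{2}=2w_{1}=2w_{3}$, hence $w_{1}=w_{3}$ (not $w_{2}=w_{3}$); from $wx+y^{2}x+\lambda y^{2}z$ one gets $w_{4}+w_{1}=2w_{2}+w_{1}=2w_{2}+w_{3}$, hence $w_{4}=2w_{2}$. Combining, $3w_{2}=2w_{1}$, so $(w_{1},w_{2},w_{3},w_{4})=(3,2,3,4)$ up to scaling --- the paper uses $(6,4,6,8)$ with degrees $(12,14)$. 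Your $(2,1,1,3)$ does not even make $wy+x^{2}-z^{2}$ weighted homogeneous ($wy$ and $x^{2}$ get weight $4$ while $z^{2}$ gets weight $2$), and your relation $w_{2}+w_{4}=w_{1}+2w_{2}$ illegitimately equates weights of monomials lying in \emph{different} equations. Worse, with your weights and $d_{2}=5$ the monomial $y^{3}z$ has weight $4<d_{2}$, so by your own criterion (``strip off every summand whose degree is smaller than $d_{i}$'') it would be discarded rather than retained as the $\mu$-constant direction, and $\lambda y^{2}z$ would not be quasihomogeneous of degree $d_{2}$ either. With the correct weights, $y^{2}z$ has weight exactly $d_{2}$ and $y^{3}z$ has weight $d_{2}+w_{2}$, so the rescaling produces the factor $\xi^{w_{2}}$ ($=\xi^{4}$ in the paper's normalization) that you set equal to $1/\lambda_{3}$; this is exactly the paper's conclusion. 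The argument is therefore salvageable, but as written the key quantitative step is incorrect and would derail the identification of the versal $\mu$-constant deformation if followed literally.
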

\begin{proof}
In the list of C.T.C Wall $M_{1,0}$ is the singularity defined by the ideal
 \[\gen{wy+x^{2}-z^{2},wx+y^{2}x+\lambda y^{2}z}.\]
with Milnor number $\mu=13$ and Tjurina number $ \tau=13$. The versal deformation of $M_{1,0}$ is
\begin{dmath*}{}
\gen{wy+x^{2}-z^{2}+t_{1}y^{2}+t_{2}y+t_{3},wx+y^{2}x+\lambda y^{2}z+\lambda_{1}yw+\lambda_{2}w+\lambda_{3}y^{3}z+\lambda_{4}y^{2}z+\lambda_{5}yz+\lambda_{6}z+\lambda_{7}y^{3}+
\lambda_{8}y^{2}+\lambda_{9}y+\lambda_{10}}.
\end{dmath*}
 $M_{1,0}$ defines a weighted homogenous isolated complete intersection singularity with weights  \[(w_{1},w_{2},w_{3},w_{4}) = (6,4,6,8)\]
and the degrees
\[(d_{1},d_{2})=(12,14)\]
The versal $\mu$-constant deformation of $M_{1,0}$ is given by
 \[\gen{wy+x^{2}-z^{2},wx+y^{2}x+\lambda y^{2}z+\lambda_{3}y^{3}z}.\]
 Using the coordinate change $x\rightarrow \xi^{6}x$, $y\rightarrow \xi^{4}y$,
$z\rightarrow \xi^{6}z$, $w\rightarrow \xi^{8}w$ we obtain
\[\gen{wy+x^{2}-z^{2},wx+y^{2}x+\lambda y^{2}z+\lambda_{3}\xi^{4}y^{3}z}.\]
 Choosing $\xi$ such that $\lambda_{3}\xi^{4}= 1$
\[\gen{wy+x^{2}-z^{2},wx+y^{2}x+\lambda y^{2}z+y^{3}z}.\]
with Tjurina number $\tau = 12$.
\end{proof}
\begin{prop}
\label{M15}
The unimodular complete intersection surface singularities with Milnor number $\mu=15$   are $M_{15}$ with Tjurina number $\tau=15$ defined by the ideal
\[\gen{wy+x^{2}-z^{2},wx+y^{4}},\]
$M_{15,1}$ with Tjurina number $\tau=14$ defined by the ideal
\[\gen{wy+x^{2}-z^{2},wx+y^{4}+y^{2}w}\]
and $M_{15,2}$ with Tjurina number $\tau=13$ defined by the ideal
\[\gen{wy+x^{2}-z^{2},wx+y^{4}+y^{3}w }.\]
\end{prop}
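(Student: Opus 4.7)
The plan is to follow the template of Propositions~\ref{M11} and \ref{M13}, combined with the multi-case $\mu$-constant normalisation strategy of Propositions~\ref{K15} and \ref{L15}. We start from the Wall representative $M_{15}=\gen{wy+x^{2}-z^{2},\ wx+y^{4}}$ with $\mu=\tau=15$, and show that its versal deformation, intersected with the $\mu=15$ stratum, breaks into the three contact classes $M_{15}$, $M_{15,1}$, $M_{15,2}$.

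First I would compute the versal deformation of $M_{15}$ from a monomial basis of $T^{1}$ in \singular; as in the proofs for $M_{11}$ and $M_{1,0}$, the $\tau=15$ perturbations of the first generator contribute low-order terms in $y,z$, while the second generator absorbs a list of low-weighted-degree monomials in $y,z,w$. Next I observe that $M_{15}$ is weighted homogeneous: from $wy,x^{2},z^{2}$ in degree $d_{1}$ and $wx,y^{4}$ in degree $d_{2}$ the relations $w_{4}+w_{2}=2w_{1}=2w_{3}$ and $w_{4}+w_{1}=4w_{2}$ force
\[
(w_{1},w_{2},w_{3},w_{4})=(5,3,5,7),\qquad (d_{1},d_{2})=(10,12).
\]
Restricting the versal deformation to directions of strictly positive weighted degree, and discarding those killed by the Jacobi ideal, leaves only the two surviving monomials $y^{2}w$ (weighted degree $13$) and $y^{3}w$ (weighted degree $16$), both attached to the second generator, so the versal $\mu$-constant family has the shape
\[
\gen{wy+x^{2}-z^{2},\ wx+y^{4}+\alpha_{1}y^{2}w+\alpha_{2}y^{3}w}.
\]

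To normalise this family I apply $\phi_{\xi}(x,y,z,w)=(\xi^{5}x,\xi^{3}y,\xi^{5}z,\xi^{7}w)$, which multiplies the two generators by $\xi^{10}$ and $\xi^{12}$ respectively and sends $\alpha_{1}\mapsto\xi\alpha_{1}$ and $\alpha_{2}\mapsto\xi^{4}\alpha_{2}$. The case analysis is then: if $\alpha_{1}\neq 0$, choose $\xi=\alpha_{1}^{-1}$ and use the congruence $wy\equiv z^{2}-x^{2}\pmod{I}$ together with a right-coordinate change of the form $y\mapsto y+cy^{2}$ to absorb the residual $\alpha_{2}y^{3}w$ term, obtaining the normal form of $M_{15,1}$; if $\alpha_{1}=0$ and $\alpha_{2}\neq 0$, choose $\xi^{4}=\alpha_{2}^{-1}$ to obtain $M_{15,2}$; if both coefficients vanish we recover $M_{15}$ itself. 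A direct \singular\ computation then verifies the stated Tjurina numbers $\tau(M_{15,1})=14$ and $\tau(M_{15,2})=13$, which in particular shows that the three resulting ideals are pairwise inequivalent.

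The step I expect to be the main obstacle is precisely the absorption in the case $\alpha_{1}\neq 0$. Pure scaling normalises only a single coefficient, so establishing that $M_{15,1}$ is a one-term normal form rather than a genuine two-parameter family requires combining $\phi_{\xi}$ with a non-trivial contact equivalence using the first generator---the analogue of the $(Aw+B)$-unit manipulation used in Proposition~\ref{L15} and of the iterated coordinate change in Proposition~\ref{J6m+9}---and one has to keep careful track to ensure the Tjurina number drops by exactly $1$ rather than by $2$.
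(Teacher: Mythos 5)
Your proposal follows the paper's proof essentially step for step: the same versal deformation, the same weights $(5,3,5,7)$ and degrees $(10,12)$, the same two-parameter versal $\mu$-constant family $\langle wy+x^{2}-z^{2},\,wx+y^{4}+\lambda_{1}y^{3}w+\lambda_{2}y^{2}w\rangle$, and the same case split on the coefficient of the lower-weight monomial $y^{2}w$, yielding $M_{15}$, $M_{15,1}$ ($\tau=14$) and $M_{15,2}$ ($\tau=13$). The only difference is presentational, in the case $\lambda_{2}\neq 0$: where you first scale that coefficient to $1$ and then absorb the residual $y^{3}w$ term by an extra coordinate change, the paper factors $\lambda_{1}y^{3}w+\lambda_{2}y^{2}w=(\lambda_{1}y+\lambda_{2})\,y^{2}w$ and normalizes the unit $\lambda_{1}y+\lambda_{2}$ together with the quasi-homogeneous scaling --- the same absorption, packaged differently (and no more rigorously than your sketch of the step you flag as the main obstacle).
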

\begin{proof}
In the list of C.T.C Wall $M_{15}$ is the singularity defined by the ideal
\[\gen{ wy+x^{2}-z^{2},wx+y^{4}}\]
with Milnor number $\mu=15$ and Tjurina number $ \tau=15.$
The versal deformation of $M_{15}$ is
\begin{dmath*}{}
\gen{wy+x^{2}-z^{2}+t_{1}y^{2}+t_{2}y+t_{3},wx+y^{4}+\lambda_{1}y^{3}w+\lambda_{2}y^{2}w+\lambda_{3}yw+\lambda_{4}w
+\lambda_{5}y^{2}z+\lambda_{6}yz+\lambda_{7}z+\lambda_{8}y^{3}+\lambda_{9}y^{2}+\lambda_{10}y+\lambda_{11}}.
\end{dmath*}
 $M_{15}$ defines a weighted homogenous isolated complete intersection singularity with weights
\[(w_{1},w_{2},w_{3},w_{4}) = (5,3,5,7)\]
and the degrees
\[(d_{1},d_{2})=(10,12).\]
\newpage
The versal $\mu$-constant deformation of $M_{15}$ is given by
\[\gen{ wy+x^{2}-z^{2},wx+y^{4}+\lambda_{1}y^{3}w+\lambda_{2}y^{2}w}.\]
If $\lambda_{2}\neq 0$ then we have
\[I= \gen{wy+x^{2}-z^{2},wx+y^{4}+uy^{2}w}\]
where $u = \lambda_{1}y+\lambda_{2}$.
Using the coordinate change $ x\rightarrow \xi^{6}x$, $y\rightarrow \xi^{4}y$, $ z\rightarrow \xi^{6}z$ and $ w\rightarrow \xi^{8}w$ we obtain
\[\gen{wy+x^{2}-z^{2},wx+y^{4}+u\xi^{}y^{2}w}.\]
Choosing $ u\xi^{2} = 1$ we obtain
\[\gen{wy+x^{2}-z^{2},wx+y^{4}+y^{2}w}\]
with Tjurina number $\tau = 14$. If $\lambda_{2}= 0$ then again by the same transformation we obtain
\[ \gen{wy+x^{2}-z^{2},wx+y^{4}+y^{3}w}\]
with Tjurina number $\tau = 13$ by choosing
$\lambda_{1}\xi^{2}= 1$.
\end{proof}
Summarizing the results of the above prepositions we complete the list of unimodular complete intersection singularities in case of $\langle f,g \rangle$ having 2-jet with normal forms  $\gen{wy+x^{2}-z^{2},wx}$.
\bigskip
\begin{table}[H]
\[
\begin{array}{|l|c|c|c|}
\hline Type &Normal form & \mu  &\tau\\
\hline M_{11,1}       &   \gen{wy+x^{2}-z^{2},wx+y^{3}+y^{2}w}& 11     &   10  \\
\hline M_{1,0,1}      &   \gen{wy+x^{2}-z^{2},wx+y^{2}x+\lambda y^{2}z+y^{3}z}& 13     &   12  \\
\hline M_{15,1}       &   \gen{wy+x^{2}-z^{2},wx+y^{4}+y^{3}w}& 15     &   14  \\
\hline M_{15,2}       &   \gen{wy+x^{2}-z^{2},wx+y^{4}+y^{2}w}& 15     &   13  \\
\hline
\end{array}
\]
\bigskip
\caption{}
\label{Mnew}
\end{table}
\begin{prop}
\label{Mcase}
Let $(V(\langle f,g\rangle ),0)\subseteq (\mathbb{C}^{4},0)$ be the germ of a complete intersection surface singularity. Assume it is not a hypersurface singularity and the $2$-jet of $\langle f,g\rangle$ has normal form $\langle wy+x^{2}-z^{2},wx\rangle$. $(V(\langle f,g\rangle ),0)$ is unimodular if and only if it is isomorphic to a complete intersection in Tables \ref{Mold} and \ref{Mnew}.
\end{prop}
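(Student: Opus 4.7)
The plan is to mirror the structure of the analogous summarizing propositions already proved in the paper (for the $I$, $J'$, $K'$, $L$ cases). The statement is a purely descriptive one: it asserts that fixing the $2$-jet to have normal form $\langle wy+x^{2}-z^{2},wx\rangle$ reduces the classification of unimodular complete intersection surface singularities to exactly the list in Tables \ref{Mold} and \ref{Mnew}. So the argument is essentially a synthesis rather than a new calculation.

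First, I would invoke C.\,T.\,C.\ Wall's original classification (cf.\ \cite{Wal83}), which for this $2$-jet normal form produces exactly the $\mu$-constant strata $M_{11}$, $M_{1,0}$, $M_{1,i}$ ($i\ge 1$), and $M_{15}$ listed in Table \ref{Mold}. This handles the ``only if'' direction at the level of $\mu$-constant strata: any unimodular complete intersection surface singularity with the given $2$-jet lies in one of these strata, up to contact equivalence of its $\mu$-constant part. What remains is to enumerate the distinct isomorphism classes inside each stratum, which requires looking at the versal $\mu$-constant deformation one stratum at a time.

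Next, I would apply Propositions \ref{M11}, \ref{M13}, and \ref{M15} stratum by stratum. In each case the technique is the same: $M_{11}$, $M_{1,0}$, and $M_{15}$ are weighted homogeneous, so the versal $\mu$-constant deformation is spanned by a finite list of monomial directions of positive weight, and the $\mathbb{C}^{*}$-action associated with the weights can be used to normalize each surviving parameter to $0$ or $1$. These propositions already exhibit the resulting new normal forms $M_{11,1}$, $M_{1,0,1}$, $M_{15,1}$, $M_{15,2}$ collected in Table \ref{Mnew}. For the family $M_{1,i}$ with $i\ge 1$, the representatives in Table \ref{Mold} already cover all isomorphism classes with $\mu=13+i$, $\tau=13+i-2$, so no additional entries arise.

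Assembling these two ingredients gives both directions. For ``if,'' each entry in Tables \ref{Mold} and \ref{Mnew} is explicitly a complete intersection surface singularity with the required $2$-jet and was verified to be unimodular in the referenced propositions or in \cite{Wal83}. For ``only if,'' Wall's list exhausts the $\mu$-constant strata, and Propositions \ref{M11}--\ref{M15} exhaust the isomorphism classes within the modular strata. I do not expect a genuine obstacle here: the whole content is already in the preceding propositions, and the proof reduces to a one-line citation, exactly as in the analogous statements (e.g.\ Proposition \ref{Lcase}) handled earlier in the paper.
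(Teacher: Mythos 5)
Your proposal is correct and matches the paper's own argument, which simply cites C.\,T.\,C.\ Wall's classification together with Propositions \ref{M11}--\ref{M15} as a direct consequence. Your elaboration of why that citation suffices (Wall's list exhausts the $\mu$-constant strata, and the versal $\mu$-constant deformations in those propositions exhaust the isomorphism classes within each stratum) is exactly the intended reading.
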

\begin{proof}
The proof is a direct consequence of C.T.C. Wall's classification and Propositions \ref{M11} - \ref{M15}
\end{proof}
\newpage
We summarize our approach in this case in Algorithm \ref{Msing}.
\begin{algorithm}[H]
\caption{\texttt{Msingularity(I)}}
\label{Msing}
\begin{algorithmic}[1]
\REQUIRE $I=\gen{f,g}\subseteq \gen{x,y,z,w}^2\mathbb{C}[[x,y,z,w]]$ such that $2$-jet  of $I$\\             has normal form $\gen{2wy+x^{2}-z^{2}, 2wx}$
\ENSURE the type of the singularity
\vspace{0.1cm}
\STATE compute $\mu=$Milnor number of $I$;
\STATE compute $\tau=$Tjurina number of $I$;
\STATE compute $B=$the singularity type of the strict transform of $I$ in the blowing up of $\gen{x,y,z,w}$
\IF{$\mu=11$ and $B=1$}
\IF{$\mu=\tau$}
\RETURN $(M_{11})$;
\ENDIF
\IF{$\mu - \tau = 1$}
\RETURN $(M_{11,1})$;
\ENDIF
\ENDIF
\IF{$\mu=13$ and $B=A[1]$}
\IF{$\mu=\tau$}
\RETURN $(M_{1,0})$;
\ENDIF
\IF{$\mu - \tau = 1$}
\RETURN $(M_{1,0,1})$;
\ENDIF
\ENDIF
\IF{$\mu=15$ and $B=D[5]$}
\IF{$\mu=\tau$}
\RETURN $(M_{15})$;
\ENDIF
\IF{$\mu - \tau = 1$}
\RETURN $(M_{15,1})$;
\ENDIF
\IF{$\mu - \tau = 2$}
\RETURN $(M_{15,2})$;
\ENDIF
\IF{$\mu\neq \tau$}
\IF{$\mu-\tau=2$ and $\mu>13$}
\IF{$B=A[\mu-11]$}
\RETURN $(M_{1,\mu-13})$;
\ENDIF
\ENDIF
\ENDIF
\ENDIF
\RETURN (not unimodular);
\end{algorithmic}
\end{algorithm}
\section{Main Algorithm}
\label{main}
Now we give our main Algorithm \ref{Surface} by using the all Algorithms given in section \ref{ctcwall} by which we can compute the type of the singularity.
\begin{algorithm}[H]
\caption{\texttt{classifyicis2(I)}[Unimodular surface singularities]} \label{Surface}
\begin{algorithmic}[1]
\REQUIRE $I=\langle f,g\rangle  \subseteq \langle x,y,z,w\rangle^2\mathbb{C}[[x,y,z]]$ isolated complete intersection curve singularity.
\ENSURE The type of the singularity $(V(I),0)$.
\vspace{0.1cm}
\STATE compute $I_{2}$ the $2$-jet of $I$;
\STATE compute $I_{2}=\bigcap^s_{i=1}Q_{i}$  the irredundant primary decomposition over $\mathbb{C}$;
\STATE compute $d_{i}=$Krull dimension of $\mathbb{C}[x,y,z,w]/Q_{i}$;
\STATE compute $h_{i}\in \mathbb{Q}[t]$ the Hilbert polynomial corresponding to each $Q_{i}$;
\STATE compute $t$ number of absolute prime ideals of $I_{2}$ and $j_{i}$ the number of conjugates.
\IF {$s=4$}
\IF {$d_{1}=d_{2}=d_{3}=d_{4}=1$}
\IF{$h_{1}=h_{2}=h_{3}=h_{4}=1+t$}
\RETURN $\texttt{Isingularity(I)}$; via Algorithm \ref{Ising}
\ENDIF
\ENDIF
\ENDIF
\IF {$s=3$}
\IF {$d_{1}=d_{2}=d_{3}=2$}
\IF{$h_{1}=1+2t$ and $h_{2}=h_{3}=1+t$}
\IF{$t=3$, $j_{1}=j_{2}=j_{3}=1$}
\IF{$ Q_{3}\subseteq Q_{1}+ Q_{2}\,\,and\,\, Q_{3}\,\,has\,\,a\,\,generator\,\,of\,\,order\,\,2$}
\RETURN $\texttt{Msingularity(I)}$; via Algorithm \ref{Msing}
\ENDIF
\IF{$Q_{3}\nsubseteq Q_{1}+ Q_{2} \,\,and\,\, Q_{3}\,\,has\,\,a\,\,generator\,\,of\,\,order\,\,2$}
\RETURN $\texttt{Tsingulrity(I)}$; via Algorithm \ref{Tsing}
\ENDIF
\ENDIF
\ENDIF
\ENDIF
\ENDIF
\IF {$s=2$}
\IF {$d_{1}=d_{2}=2$}
\IF{$h_{1}=1+t=h_{2}=1+3t$}
\RETURN $\texttt{Lsingularity(I)}$; via Algorithm \ref{Lsing}
\ENDIF
\IF{$h_{1}=h_{2}=1+2t$}
\RETURN $\texttt{Tsingularity(I)}$; via Algorithm \ref{Tsing}
\ENDIF
\IF{$h_{1}=h_{2}=1+2t$}
\RETURN $\texttt{Tsingularity(I)}$; via Algorithm \ref{Tsing}
\ENDIF
\ENDIF
\ENDIF
\IF {$s=1$}
\IF {$d_{1}=2$ and $h_{1}=4t$}
\IF{$t=1$, $j_{1}=1$}
\IF{$A_{2}$ after blowing up}
\RETURN $\texttt{J}'\texttt{singularity(I)}$; via Algorithm \ref{Jsing}
\ENDIF
\IF{$A_{1}$ after blowing up}
\RETURN $\texttt{Tsingularity(I)}$; via Algorithm \ref{Tsing}
\ENDIF
\ENDIF
\IF{$t=2$ and $j_{1}=1,j_{2}=1$}
\RETURN $\texttt{Tsingularity(I)}$; via Algorithm \ref{Tsing}
\ENDIF
\IF{$t=1$ and $j_{1}=2$}
\RETURN $\texttt{K}'\texttt{singularity(I)}$; via Algorithm \ref{Ksing}
\ENDIF
\ENDIF
\ENDIF
\RETURN $(not\quad unimodular)$;
\end{algorithmic}
\end{algorithm}

{\bf Acknowledgements}
The part of this work is carried out at Kaiserslautern University, Germany. We are thankful to DAAD Germany for the financial support.


\begin{thebibliography}{99}
\bibitem[{ADPG1}]{ADPG1} Afzal, D.; Pfister, G.: \emph{A classifier for simple isolated complete interssection singularities}. Studia Scientiarum Mathematicarum Hungarica 52(1), (2015), 1-11.

\bibitem[{ADPG2}]{ADPG2} Afzal, D.; Pfister, G.: \emph{classifyci.lib}. A {\sc Singular} {3-1-6} library for classifying simple isolated complete singularities for the base field of characteristic 0 (2013).

\bibitem[{ADPG3}]{ADPG3} Afzal, D.; Pfister, G.: \emph{A classifier for unimodular isolated complete interssection curve singularities}. Analele Univ. "Ovidius" din Constanta, Math. Series 24(1), (2016), 95-119.

\bibitem[{JP00}]{JP00} De Jong, T.; Pfister, G.: \emph{Local Analytic Geometry}. Vieweg (2000).

\bibitem[{DGPS13}]{DGPS13} Decker, W.; Greuel, G.-M.; Pfister, G.; Sch{\"o}nemann, H.:
  \newblock {\sc Singular} {3-1-6} -- {A} computer algebra system for polynomial
  computations. \newblock {\tt http://www.singular.uni-kl.de} (2013).

 \bibitem[{GM83}]{GM83} Giusti, M.: \emph{Classification des Singularit\'{e}s Isol\'{e}es Simples d'Intersections Compl\`{e}tes}. Proc. Symp. Pure Math. 40,  (1983), 457-494.

 \bibitem[{GM75}]{GM75} Greuel, G.-M.: \emph{Der Gau{\ss}-Manin-Zusamnenhang isolierter singul$\ddot{a}$ritaten von vollst$\ddot{a}$ndigen Durchschnitten}.
  Math. Ann. 214(1975), 235-266.

\bibitem[{GP07}]{GP07} Greuel, G.-M.: Pfister, G.: A \textsc{Singular} Introduction to
  Commutative Algebra. Second edition, Springer (2007).

\bibitem[{Wal83}]{Wal83} Wall, C.T.C.: \emph{Classification of unimodal isolated singularities complete intersections}. Proc. Symp. Pure Math. 40, Part2, Amer. Math. Soc. (1983), 625-640.
\end{thebibliography}
\end{document}